\pgfplotsset{compat=1.6}
\newdimen\LineSpace
\tikzset{
	line space/.code={\LineSpace=#1},
	line space=10pt
}
\def\blfootnote{\gdef\@thefnmark{}\@footnotetext}
\newcommand{\R}{\mathbb R}
\newcommand{\C}{\mathbb C}
\newcommand{\mc}{\mathcal}
\renewcommand{\phi}{\varphi}
\newcommand{\n}{\nabla}
\newcommand{\pd}{\partial}
\newcommand{\tr}{{\rm tr}}
\newcommand{\scal}{\widehat{\mathrm{sc}}}
\renewcommand{\ge}{\geqslant}
\renewcommand{\le}{\leqslant}
\newcommand{\mf}{\mathfrak}
\newcommand{\Ad}{\mathrm{Ad}}
\newcommand{\ad}{\mathrm{ad}}
\newcommand{\Sym}{\mathrm{Sym}}
\newcommand{\End}{\mathrm{End}}
\newcommand{\la}{\langle}
\newcommand{\ra}{\rangle}
\renewcommand{\bar}[1]{\mkern 0.5mu\overline{\mkern-0.5mu#1\mkern-0.5mu}\mkern 0.5mu}
\theoremstyle{plain}
\newtheorem{theorem}{Theorem}[section]
\newtheorem*{theorem*}{Theorem}
\newtheorem{lemma}[theorem]{Lemma}
\newtheorem{problem}[theorem]{Problem}
\newtheorem{corollary}[theorem]{Corollary}
\newtheorem{proposition}[theorem]{Proposition}
\theoremstyle{definition}
\newtheorem{remark}[theorem]{Remark}
\newtheorem{definition}[theorem]{Definition}
\newtheorem{example}[theorem]{Example}
\begin{document}
\title[Lie-Algebraic Curvature Conditions preserved by HCF]{Lie-Algebraic Curvature Conditions \\preserved by the Hermitian Curvature Flow}
\date{}
\author{Yury Ustinovskiy}
\address{Fine Hall, Princeton University, Princeton, NJ, 08540}
\email{yuryu@math.princeton.edu}

\begin{abstract}
	The purpose of this paper is to prove that the Hermitian Curvature Flow (HCF) on an Hermitian manifold $(M,g,J)$ preserves many natural curvature positivity conditions. Following Wilking~\cite{wi-13}, for an $\Ad\,{GL(T^{1,0}M)}$-invariant subset $S\subset \End(T^{1,0}M)$ and a \emph{nice} function $F\colon \End(T^{1,0}M)\to\R$ we construct a convex set of curvature operators $C(S,F)$, which is invariant under the HCF. Varying $S$ and $F$, we prove that the HCF preserves Griffiths positivity, Dual-Nakano positivity, positivity of holomorphic orthogonal bisectional curvature, lower bounds on the second scalar curvature. As an application, we prove that periodic solutions to the HCF can exist only on manifolds $M$ with the trivial canonical bundle on the universal cover $\widetilde{M}$.
\end{abstract}
\maketitle
\section*{Introduction}\label{sec:intro}
In the last decades the Ricci flow has been successfully used in many classification and uniformization problems in Riemannian and K\"ahler geometry. The direction started in the 1980's with Hamilton's original papers on the classification of three/four-dimensional manifolds admitting metrics with positive Ricci curvature/positive curvature operator~\cite{ha-82,ha-86}. The main idea in Hamilton's approach is to control positivity of the curvature tensor along the Ricci flow, using a form of the parabolic maximum principle for tensors. Following this route one can prove the pinching of the curvature tensor towards a constant curvature tensor. In 2000's this program was used by Brendle and Schoen~\cite{br-sc-09,br-sc-08} to classify manifolds with (weakly) 1/4-pinched sectional curvature and by B\"ohm and Wilking~\cite{bo-wi-08} to describe manifolds admitting positive curvature operator in all dimensions. In the K\"ahler setting, Chen, Song and Tian~\cite{ch-so-ti-09} gave a Ricci flow-based proof of the Frankel conjecture~\cite{fr-61}. This conjecture states that the existence of a K\"ahler metric of positive holomorphic bisectional curvature on a complex manifold $M$ implies that $M$ is biholomorphic to a complex projective space; it was solved by Siu and Yau~\cite{si-ya-80} by studying the space of harmonic maps $S^2\to M$. The authors of~\cite{ch-so-ti-09} proved that any K\"ahler metrics of positive holomorphic bisectional curvature under the Ricci flow pinches towards the metric of constant curvature.

Unlike the K\"ahler situation, in a general Hermitian setting there are very few efficient analytic tools. For this reason it is interesting to extend the Ricci flow onto Hermitian manifolds. Unfortunately, the Ricci flow itself is not well-suited for the category of Hermitian manifolds, since, on a general Hermitian manifold $(M,g,J)$, the Ricci tensor $Ric(g)$ is not necessarily $J$-invariant. Motivated by this problem, Streets ant Tian~\cite{st-ti-11} introduced a family of Hermitian Curvature Flows, generalizing the Ricci flow:
\begin{equation}\label{eq:HCF_general}
\pd_t g=-S^{(2)}+Q(T),
\end{equation}
where $S^{(2)}$ it the second Chern-Ricci curvature (see formula~\eqref{eq:Ricci_contractions} below) and $Q(T)$ is an arbitrary type-(1,1) quadratic term in torsion of $g$. If $(M,g,J)$ is K\"ahler, then $S^{(2)}=Ric(g)$, $T=0$, and all the Hermitian Curvature Flows coincide with the K\"ahler-Ricci flow. Recently, in a series of papers~\cite{st-ti-10,st-ti-12,st-16,st-16-2} Streets and Tian investigated a specific member of family~\eqref{eq:HCF_general} with $Q(T)_{i\bar j}=g^{m\bar n}g_{p\bar s}T^p_{i m}T_{\bar j\bar n}^{\bar s}$, which they call the~\emph{pluriclosed flow}. This flow preserves the class of pluriclosed metrics (i.e, $g$, s.t., $\pd\bar{\pd}\omega_g=0$, where $\omega_g=g(J\cdot,\cdot)$), and in many geometric situations satisfies global existence and convergence results.

In this paper we study \emph{another} member of~\eqref{eq:HCF_general} with $Q(T)_{i\bar j}=-1/2g^{m\bar n}g^{p\bar s}T_{mp\bar j}T_{\bar n\bar s i}$. This specific flow was first introduced in~\cite{us-16}, where we proved that it preserves Griffiths positivity (non-negativity) of $(TM,g)$. We refer to this flow as the HCF (Hermitian Curvature Flow). The choice of the quadratic term $Q(T)$ is motivated by a very special evolution equation for the Chern curvature $\Omega$ under~\eqref{eq:HCF_general}. In~\cite{us-17} we computed the HCF for the induced metrics on all complex homogeneous manifolds. In the present paper, we reinterpret this equation, by setting a space of algebraic curvature tensors and introducing natural operations on this space. It allows us to find a much clearer expression for $\pd_t \Omega$ (Proposition~\ref{prop:omega_evolution_clear}).

Our principle goal in this paper is to prove that the HCF preserves many natural curvature positivity conditions, besides Griffiths positivity. Following Wilking~\cite{wi-13}, for every $\Ad\,GL(T^{1,0}M)$-invariant $S\subset \End(T^{1,0}M)$ and any \emph{nice} function (see Definition~\ref{def:nice}) $F\colon \End(T^{1,0}M)\to \R$, we define a closed convex set of curvature-type tensors $C(S,F)$. We prove a modification of Hamilton's maximum principle~\cite{ha-86} (Section~\ref{sec:max_principle}), and adopt arguments of~\cite{wi-13} to prove that sets $C(S,F)$ are preserved by the HCF. Algebraic structure of the evolution equation for $\Omega$ plays the crucial role in this proof.
\begin{theorem}
	For any $\Ad\,GL(T^{1,0}M)$-invariant subset $S\subset \End(T^{1,0}M)$ and any nice function $F\colon \End(T^{1,0}M)\to\R$, the curvature condition $C(S,F)$ is preserved by the HCF.
\end{theorem}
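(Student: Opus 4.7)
The plan is to reduce preservation of $C(S,F)$ under the HCF to a pointwise ODE statement via the parabolic maximum principle for tensors, and then to verify this ODE statement using the algebraic form of the evolution equation for $\Omega$ together with Wilking's Lie-theoretic argument. As a preliminary step I would check that $C(S,F)$, viewed as a subbundle of the bundle of algebraic curvature tensors, is closed, convex, and invariant under the fiberwise $GL(T^{1,0}M)$-action; the convexity should be built into the notion of nice function, while fiberwise $GL$-invariance comes directly from the $\Ad\,GL$-invariance of $S$ and the equivariance properties of $F$.

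Next, using the clean algebraic expression for $\pd_t \Omega$ from Proposition~\ref{prop:omega_evolution_clear}, the evolution can be schematically written as $\pd_t \Omega = \Delta \Omega + \mc R(\Omega)$, with $\mc R$ polynomial in $\Omega$ and in the torsion $T$. The modified Hamilton maximum principle from Section~\ref{sec:max_principle} then reduces the theorem to the pointwise claim that, at every boundary point $\Omega\in \pd C(S,F)$, the reaction vector $\mc R(\Omega)$ lies in the tangent cone $T_\Omega C(S,F)$. To analyze this, following Wilking, I would pick a witness $X_0\in S$ saturating the defining inequality, and use the $\Ad\,GL$-invariance of $S$ to consider the one-parameter deformation $g_t X_0 g_t^{-1}$ with $g_0=\mathrm{id}$. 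Minimality of the resulting function of $t$ at $t=0$ produces first-order vanishing identities together with a second-order inequality, which when substituted into the algebraic expression for the $X_0$-component of $\mc R(\Omega)$ should exhibit the correct sign.

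The main obstacle I expect is this last step. Unlike the K\"ahler-Ricci flow, the reaction term for the HCF contains additional torsion contributions coming from $Q(T)$ and from the non-K\"ahlerness of $\Omega$. The particular choice $Q(T)_{i\bar j}=-\tfrac12 g^{m\bar n}g^{p\bar s}T_{mp\bar j}T_{\bar n\bar s i}$ in the definition of the HCF is made precisely so that these extra terms aggregate into a manifestly favorable algebraic structure in Proposition~\ref{prop:omega_evolution_clear}. The bulk of the technical work is then to show, term by term, that this aggregation is compatible with the algebraic identities used by Wilking in the K\"ahler-Ricci flow setting, so that the $\Ad\,GL$-invariance of $S$ and niceness of $F$ combine with the extra torsion contribution to produce a vector pointing into $T_\Omega C(S,F)$ in every relevant direction.
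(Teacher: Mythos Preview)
Your overall strategy is exactly the paper's: use Proposition~\ref{prop:omega_evolution_clear} to write $\pd_t\Omega=\Delta^T\Omega+\Omega^2+\Omega^\#+D(\n T)+\ad_v\Omega$, apply the modified Hamilton maximum principle of Section~\ref{sec:max_principle} (needed because $\n^T$ is not metric), and verify property~(P4) by Wilking's variation $u(\tau)=\exp(\tau\,\ad_x)u$. Two points, however, deserve correction.

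First, you misidentify where the work lies. The torsion contribution is \emph{not} the hard part: after Proposition~\ref{prop:omega_evolution_clear} the quadratic torsion term is $D(\n T)=\sum a_i\otimes\bar{a_i}$ with $a_{(mp)}=\n_iT^k_{mp}\,e_k\otimes\epsilon^i$, so $\la D(\n T),s\otimes\bar s\ra_\tr=\sum|\tr(a_is)|^2\ge 0$ automatically. Likewise $\la\Omega^2,s\otimes\bar s\ra_\tr\ge 0$ is immediate, and $\la\ad_v\Omega,s\otimes\bar s\ra_\tr=0$ drops out of the \emph{first}-order condition $\Psi'(0)=0$. The only nontrivial sign is $\la\Omega^\#,s\otimes\bar s\ra_\tr\ge 0$, and this is precisely the standard Wilking argument using $\Psi''(0)\ge 0$; no extra HCF-specific compatibility check is needed. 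Relatedly, convexity of $C(S,F)$ has nothing to do with niceness of $F$: $C(S,F)$ is an intersection of closed halfspaces and is closed and convex for \emph{any} $F$.

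Second, and more substantively, your sketch assumes that a boundary point $\Omega\in\pd C(S,F)$ always has a witness $X_0\in S$ with $\la\Omega,X_0\otimes\bar{X_0}\ra_\tr=F(X_0)$. This fails when $S$ is unbounded or not closed: the support functional may arise only as a limit, from the boundary at infinity $\pd_\infty S=\{\lim\lambda_is_i:\lambda_i\searrow 0,\ s_i\in S\}$. This is exactly what niceness ($\star_2$) is for: it guarantees the existence of $F_\infty(s)=\lim\lambda_i^2F(s_i)$, so that $\Omega\in C(\pd_\infty S,F_\infty)$ as well, and one can run the same Wilking argument on that auxiliary set. The paper's proof of Theorem~\ref{thm:ode_invariance} shows that every support functional at $\Omega$ lies in the cone over $\mc F_y^b\cup\mc F_y^\infty$, with the two pieces corresponding to witnesses in $\bar S$ and in $\pd_\infty S$ respectively. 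Without this step your verification of~(P4) is incomplete.
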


We also prove a \emph{strong} version of the above theorem. Namely, we characterize the set of points, where $\Omega$, corresponding to the evolved metric, hits the boundary of $C(S,F)$ (Theorem~\ref{thm:br-sc}). Choosing various $S$ and $F$, we prove that the HCF preserves Griffiths positivity, Dual-Nakano positivity, positivity of holomorphic orthogonal bisectional curvature, lower bounds on the second scalar curvature (Section~\ref{sec:examples}). The latter has important consequences for the understanding of possible stationary and periodic solutions to the HCF. In particular, we prove:
\begin{theorem}
	If a compact complex manifold $M$ admits an HCF-periodic Hermitian metric, then the pull-back of the canonical bundle to the universal cover of $M$ is holomorphically trivial.
\end{theorem}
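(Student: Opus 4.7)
The strategy is to combine the volume-form evolution of the HCF with the special case of the main theorem that HCF preserves lower bounds on the second Chern scalar curvature $\scal=g^{i\bar j}S^{(2)}_{i\bar j}$ (an explicit application in Section~\ref{sec:examples}), using the periodicity hypothesis to force enough rigidity that the flow reduces to K\"ahler--Ricci flow with vanishing first Chern-Ricci form $\rho$. Once $\rho\equiv 0$ for some metric on $K_M$, the Chern connection on $K_M$ is flat, and on the simply connected cover $\widetilde M$ a holomorphic line bundle carrying a flat Hermitian metric is holomorphically trivial, giving the conclusion.

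The first step is a direct computation. Tracing $\pd_t g=-S^{(2)}+Q(T)$ with $g^{-1}$ and using the explicit form $Q(T)_{i\bar j}=-\tfrac{1}{2}g^{m\bar n}g^{p\bar s}T_{mp\bar j}T_{\bar n\bar s i}$ yields
\[
\pd_t \log\omega^n \;=\; -\scal - \tfrac12|T|^2,
\]
where $|T|^2\ge 0$ is the squared pointwise norm of the Chern torsion. If $g(t)$ is HCF-periodic with period $\tau$ then $\omega(t+\tau)^n=\omega(t)^n$, and integrating the above over $[0,\tau]$ produces the pointwise identity
\[
\int_0^\tau \scal(x,t)\,dt \;=\; -\tfrac12\int_0^\tau |T(x,t)|^2\,dt \;\le\; 0,\qquad x\in M.
\]

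Next, set $c_0:=\min_M\scal(\cdot,0)$. Invariance of $C(S,F)=\{\scal\ge c_0\}$ gives $\scal\ge c_0$ pointwise at all times, and combined with periodicity the weak maximum principle forces $\min_M\scal(\cdot,t)\equiv c_0$. Picking any $t_1\in(0,\tau]$ and a point $x_1\in M$ where the minimum $c_0$ is attained at time $t_1$, the strong version (Theorem~\ref{thm:br-sc}) propagates the equality and yields $\scal\equiv c_0$ on $M\times[0,t_1]$, hence on $M\times\R$ by periodicity. Substituting back gives
\[
c_0\,\tau \;=\; -\tfrac12\int_0^\tau |T(x,t)|^2\,dt \;\le\; 0\quad\text{for every }x\in M,
\]
so $c_0\le 0$ and the torsion has a spatially constant time-averaged norm.

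The main technical obstacle is the final implication $c_0\le 0\Rightarrow c_0=0$ and consequently $T\equiv 0$. The natural route is to feed the rigid equality $\scal\equiv c_0$ back into Chern--Gauduchon theory: the integral of the first Chern-Ricci form $\rho$ against a Gauduchon representative in the conformal class of $\omega^{n-1}$ is a topological invariant depending only on $c_1(M)$, and the structural information extracted from Theorem~\ref{thm:br-sc} combined with the pointwise identity above should pin down this invariant as $c_0$ times the Gauduchon volume, excluding the case $c_0<0$ by comparison with the non-negative torsion contribution. Once $c_0=0$ and $T\equiv 0$, the flow is K\"ahler with zero scalar curvature, so $\pd_t\rho=\sqrt{-1}\pd\bar\pd\scal=0$ and $\pd_t\omega=-\rho$ imply $\omega(t)=\omega(0)-t\rho$; periodicity then forces $\rho\equiv 0$, completing the proof.
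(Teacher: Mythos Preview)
Your approach has a basic misidentification and an unfillable gap.

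First, the identification $\scal=g^{i\bar j}S^{(2)}_{i\bar j}$ is wrong: in the paper $\scal=\tr_g S^{(3)}=\tr_g S^{(4)}$, while $\tr_g S^{(2)}=\mathrm{sc}$, the ordinary Chern scalar curvature. These differ when $T\neq 0$. Consequently your volume computation $\pd_t\log\omega^n=-\scal-\tfrac12|T|^2$ actually produces $-\mathrm{sc}-\tr_g Q$, which involves $\mathrm{sc}$, not $\scal$. The quantity whose lower bound is HCF-monotone in Example~\ref{ex:scalar_curvature} is $\scal$, so the volume identity and the monotonicity are talking about different scalars, and the argument that combines them collapses.

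Second, even granting $\scal\equiv c_0$, the step you flag as the ``main technical obstacle'' is not just a gap but a wrong target. You aim to prove $T\equiv 0$, i.e., that any HCF-periodic metric is K\"ahler. The paper does not assert this, and in fact leaves open even the weaker question of whether periodic solutions must be stationary; forcing $T\equiv 0$ from periodicity alone is almost certainly too strong. The Gauduchon-degree heuristic you sketch does not close: the first Chern-Ricci form $\rho$ is built from $S^{(1)}$, and relating its Gauduchon integral to $\scal=\tr_g S^{(3)}$ introduces exactly the torsion terms you are trying to control.

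The paper's route avoids both issues. It works directly with the scalar evolution
\[
\pd_t\scal=\Delta\scal+|S^{(3)}|^2+\tfrac12|\mathrm{div}\,T|^2,
\]
whose zero-order part is manifestly non-negative. Periodicity forces $\inf_M\scal$ to be constant, and the strong maximum principle for this scalar equation then gives $S^{(3)}\equiv 0$ and $\mathrm{div}\,T\equiv 0$ (not $T\equiv 0$). The key observation is that these two quantities are precisely the $(1,1)$ and $(2,0)$ parts of the curvature $\rho^T$ of the torsion-twisted connection $\n^T$ on $-K_M$; a short Bianchi-identity computation shows $\rho-\rho^T=d\alpha$ for the Lee form $\alpha$, so $\rho^T$ represents $2\pi c_1(M)$. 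With $\rho^T=0$ and $(\n^T)^{0,1}=\bar\pd$, the connection $\n^T$ on $K_M$ is flat and compatible with the holomorphic structure, so parallel transport on the universal cover produces a global nowhere-vanishing holomorphic section of $\pi^*K_M$. No K\"ahler reduction is needed.
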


We conjecture that, as in the Ricci flow case, the HCF admits only stationary periodic solutions.

There exist other Hermitian generalizations of the Ricci flow. Among them we mention the Chern-Ricci flow, introduced ans investigated by Gill~\cite{gi-11}. Its long-time existence and convergence properties were also studied by Tosatti and Weinkove \cite{to-we-13,to-we-15}. Under this flow, the metric form $\omega_g$ is evolved in the direction of the Chern-Ricci form $\rho$, which represents (the multiple of) the first Chern class of $M$.

The rest of the paper is organized as follows. In Section~\ref{sec:background} we fix basic notations of Hermitian geometry, set up the space of algebraic Chern curvature tensors, define natural operations on this space and introduce the HCF. We also recall some of the computations of~\cite{us-16}. Next, in Section~\ref{sec:max_principle} we formulate and reprove Hamilton's maximum principle for tensors in a slightly more general context. With this generalization Hamilton's maximum principle becomes applicable to the evolution equation for $\Omega$ under the HCF. In Section~\ref{sec:main} define the convex sets $C(S,F)$ of curvature-type tensors and prove that they are preserved under the HCF. This section essentially follows~\cite{wi-13}. We provide several examples of the HCF-preserved curvature conditions in Section~\ref{sec:examples}. Finally, we discuss some applications and further questions in Section~\ref{sec:applications}.

\section*{Acknowledgements}
I would like to thank my advisor Gang Tian for introducing me a circle of problems concerning Hermitian Curvature Flows, and for constant support in my research. I also wish to thank Jeffrey Streets for valuable discussions.

\section{Background}\label{sec:background}

This section consists of three parts. First, we provide some background on Hermitian geometry and set up notations, introducing basic objects, such as Chern connection, its torsion and the Chern curvature tensors. In the second part, we define a vector space of algebraic curvature tensors, and present some natural algebraic operations on it.
Lastly, we define the Hermitian Curvature Flow (HCF). Using the aforementioned operations we write down the evolution equation for the Chern curvature under the HCF.

\subsection{Hermitian geometry}
For a complex vector space $V$, we denote by $\bar V$ the underlying real vector space with a conjugate complex structure. We denote by $\Sym^{1,1}(V)$ the subspace of $V\otimes \bar V$ spanned over $\R$ by all the elements of the form $v\otimes\bar v$, $v\in V$.

Let $(M,J)$ be a compact complex manifold with the operator of almost complex structure $J\colon TM\to TM$. Denote by $TM\otimes \C=T^{1,0}M\oplus T^{0,1}M$ the decomposition the complexified tangent bundle into $\pm \sqrt{-1}$-eigenspaces of $J$. Any $J$-invariant Riemannian metric $g$ defines an Hermitian metric on $T^{1,0}M$. We denote this Hermitian metric by the same symbol.

\emph{Chern connection} on $TM$ is the connection $\n$ characterized by the following properties:
\begin{enumerate}
	\item $\n g=0$,
	\item $\n J=0$,
	\item $T(X,JY)=T(JX,Y)$ for $X,Y\in TM$,
\end{enumerate}
where $T(X,Y):=\n_X Y-\n_Y X-[X,Y]$ is the torsion tensor. There exists a unique connection satisfying these properties. After extending $\n$ to a $\C$-linear connection on $TM\otimes\C$, we can substitute property (3) with either of the following properties
\begin{enumerate}
	\item[($3_1$)] $T(\xi,\bar{\eta})=0$ for $\xi,\eta\in T^{1,0}M$,
	\item[($3_2$)] $\n_{\bar \xi}\eta=i_\xi\bar\pd\eta$ for $\xi,\eta\in T^{1,0}M$,
\end{enumerate}
where $\bar\pd\colon\Gamma(T^{1,0}M)\to\Gamma(\Lambda^{0,1}\otimes T^{1,0}M)$ is the operator of the holomorphic structure.

The \emph{Chern curvature} $\Omega$ is the curvature of the Chern connection, namely,
\begin{equation}\label{eq:chern_curvature}
\Omega(X,Y,Z,W):=g((\n_X\n_Y-\n_Y\n_X-\n_{[X,Y]})Z,W).
\end{equation}
Extend $\Omega$ to a $\C$-linear tensor. Chern curvature tensor has many symmetries: it is antisymmetric and $J$-invariant in its first and second pairs of arguments, i.e.,
\[
\Omega(X,Y,Z,W)=\Omega(JX,JY,Z,W)=\Omega(X,Y,JZ,JW),
\]
\[
\Omega(X,Y,Z,W)=-\Omega(Y,X,Z,W)=-\Omega(X,Y,W,X).
\]
These symmetries imply that $\Omega$ is completely determined by the components
\[
\Omega_{i\bar j k\bar l}:=\Omega(\pd/\pd z_i,\pd/\pd \bar{z_j}, \pd/\pd z_k,\pd/\pd \bar{z_l}),
\]
where $\{\pd/\pd z_i\}$ is a local frame. If we want to specify explicitly that $\Omega$ corresponds to a metric $g$, we write $\Omega=\Omega^g$. We also denote the components of the torsion tensor (assuming the Einstein summation convention) as
\[
T(\pd/\pd z_i,\pd/\pd z_j):=T_{ij}^k\pd/\pd z_k,
\]
\[
T_{ij\bar l}:=T_{ij}^kg_{k\bar l}.
\]
If $g_{i\bar j}$ is a coordinate expression for the metric, then the components of the torsion and the curvature tensors are given by
\[
T_{ij}^k=g^{k\bar l}(\pd_i g_{j\bar l}-\pd_jg_{i\bar l}),
\]
\[
\Omega_{i\bar j k\bar l}=-\pd_{i}\pd_{\bar j}g_{k\bar l}+g^{p\bar s}\pd_{\bar j}g_{p\bar l}\pd_{i}g_{k\bar s}.
\]
Unlike the Riemannian case, the Chern curvature does not satisfy the classical Bianchi identities, since the Chern connection has torsion. However, in this case, slightly modified identities, involving torsion still hold~\cite[Ch.\,III, Thm.\,5.3]{ko-no-63}.

\begin{proposition}[Bianchi identities for the Chern curvature and torsion]
	\begin{equation}
	\begin{split}
	\Omega_{i \bar{j} k \bar{l}}=\Omega_{k \bar{j} i \bar{l}} + \n_{\bar{j}} T_{ki\bar l},\quad &
	\Omega_{i \bar{j} k \bar{l}}=\Omega_{i \bar{l} k \bar{j}} + \n_{i} T_{\bar{l}\bar{j} k},\\
	\n_m \Omega_{i \bar{j} k \bar{l}} = \n_i\Omega_{m \bar{j} k \bar{l}} + T^p_{i m}\Omega_{p \bar{j} k \bar{l}},\quad &
	\n_{\bar n} \Omega_{i \bar{j} k \bar{l}} = \n_{\bar j}\Omega_{i \bar{n} k \bar{l}} + T^{\bar s}_{\bar j \bar n}\Omega_{i \bar{s} k \bar{l}},\\
	\n_i T_{jk}^l+\n_k T_{ij}^l+\n_j T_{ki}^l=&
	T_{ij}^pT_{kp}^l+T_{jk}^pT_{ip}^l+T_{ki}^pT_{jp}^l.
	\end{split}
	\end{equation}
\end{proposition}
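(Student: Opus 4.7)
My plan is to derive all five identities from the general Bianchi identities for an arbitrary connection with torsion, specialized to the Chern connection. Recall that for any connection $\n$ with torsion $T$ and curvature $R$, one has
\begin{equation}
\sigma_{X,Y,Z}R(X,Y)Z=\sigma_{X,Y,Z}\bigl\{(\n_X T)(Y,Z)+T(T(X,Y),Z)\bigr\}\notag
\end{equation}
(algebraic Bianchi) and
\begin{equation}
\sigma_{X,Y,Z}(\n_X R)(Y,Z)+\sigma_{X,Y,Z}R(T(X,Y),Z)=0\notag
\end{equation}
(differential Bianchi), where $\sigma$ denotes the cyclic sum. These are standard and can be checked from $R(X,Y)=[\n_X,\n_Y]-\n_{[X,Y]}$ together with the definition of $T$.

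Next I would record the structural features of the Chern connection that will make many terms vanish: (a) since $\n J=0$, the connection preserves the type decomposition and the curvature commutes with $J$; combined with $\C$-antilinearity in the first two slots this forces the $(2,0)$ and $(0,2)$ parts to be zero, so $\Omega_{ijk\bar l}=0=\Omega_{\bar i\bar j k\bar l}$; (b) property $(3_1)$ means $T(\xi,\bar\eta)=0$, i.e., the torsion is purely of type $(2,0)$ and its conjugate; (c) $\n g=0$ gives the skew-Hermitian property $\Omega_{i\bar j k\bar l}=-\Omega_{i\bar j\bar l k}$, and the vanishing $\Omega_{i\bar j kl}=0=\Omega_{i\bar j\bar k\bar l}$. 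These bookkeeping facts are what collapse the generic Bianchi sums into the stated two-term identities.

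With this in hand, for identity (1) I apply the algebraic Bianchi to $(X,Y,Z)=(\pd_i,\pd_k,\pd_{\bar j})$ and pair against $\pd_{\bar l}$. The curvature term $R(\pd_i,\pd_k)\pd_{\bar j}$ drops out by (a), the two surviving curvature terms become $\Omega_{k\bar j i\bar l}$ and $-\Omega_{i\bar j k\bar l}$ after using antisymmetry, and on the right-hand side only $(\n_{\bar j}T)(\pd_i,\pd_k)$ survives because all torsions of mixed type vanish by (b); this yields the claimed formula. Identity (2) is obtained by the symmetric choice $(X,Y,Z)=(\pd_i,\pd_{\bar j},\pd_{\bar l})$ paired with $\pd_k$, using additionally the skew-Hermitian relation from (c) to convert $\Omega_{i\bar j\bar l k}$ into $-\Omega_{i\bar j k\bar l}$. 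For the differential identities (3)--(4), I apply the second Bianchi to $(\pd_m,\pd_i,\pd_{\bar j})$ (resp.\ its conjugate): the term $(\n_{\bar j}\Omega)(\pd_m,\pd_i)$ vanishes by (a), and of the three torsion-curvature terms only $\Omega(T(\pd_m,\pd_i),\pd_{\bar j})=T^p_{mi}\Omega_{p\bar j\cdot\cdot}$ is nonzero, producing the claimed formula after using $T_{mi}^p=-T_{im}^p$. Finally, for the pure torsion identity (5), I apply the algebraic Bianchi with all three vectors of type $(1,0)$; now every curvature term vanishes by (a), leaving exactly the identification $\sigma\,\n T=\sigma\,T\circ T$.

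The computations are essentially bookkeeping, so the only real obstacle is keeping sign and index conventions straight: one must carefully track the antisymmetry of $\Omega$ in its first pair, the skew-Hermitian property in the second pair, and the placement of conjugate indices in $T$. Once those conventions are fixed consistently with the paper's, each of the five identities drops out of one application of the appropriate Bianchi relation.
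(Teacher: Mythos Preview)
Your approach is correct and is the standard derivation of these identities. Note, however, that the paper does not actually supply its own proof of this proposition: it simply states the identities and cites \cite[Ch.\,III, Thm.\,5.3]{ko-no-63} for the general Bianchi identities with torsion. So there is no ``paper's proof'' to compare against; your argument is precisely the specialization of the cited Kobayashi--Nomizu result to the Chern connection, which is exactly what the paper is implicitly invoking.
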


Presence of the torsion terms in the Bianchi identities for $\Omega$ indicates that the four Ricci contractions of the Chern curvature tensor differ from each other (see~\cite{li-ya-15} for the explicit description of the differences between these contractions):
\begin{equation}\label{eq:Ricci_contractions}
\begin{split}
S^{(1)}_{i\bar j}:=\Omega_{i\bar jm\bar n}g^{m\bar n},\quad 
S^{(2)}_{i\bar j}:=\Omega_{m\bar ni\bar j}g^{m\bar n},\\
S^{(3)}_{i\bar j}:=\Omega_{n\bar ji\bar m}g^{m\bar n},\quad 
S^{(4)}_{i\bar j}:=\Omega_{i\bar nm\bar j}g^{m\bar n}.\\
\end{split}
\end{equation}
We call these contractions the Chern-Ricci tensors. All tensors will play certain role below. Symmetries of $\Omega$ imply that the first and the second Chern-Ricci tensors define Hermitian products on $T^{1,0}M$. In general this is not the case for $S^{(3)}$ and $S^{(4)}$, since $S^{(3)}_{i\bar j}\neq\bar{S^{(3)}_{j\bar i}}=S^{(4)}_{i\bar j}$.

There are also two scalar contractions of $\Omega$: the scalar curvature $\mathrm{sc}=g^{i\bar j}g^{k\bar l}\Omega_{i\bar j k\bar l}=\tr_g S^{(1)}=\tr_g S^{(2)}$ and a quantity which will be referred to as the~\emph{second scalar curvature} and will play important role below: 
\begin{equation}\label{eq:scal_hat}
\scal=g^{i\bar l}g^{k\bar j}\Omega_{i\bar j k\bar l}=\tr_g S^{(3)}=\tr_g S^{(4)}.
\end{equation}

By rising the last two indices of the Chern curvature tensor, we can interpret $\Omega$ as a section of $\End(T^{1,0}M)\otimes\bar{\End(T^{1,0}M)}$:
\[
 		\Omega_{i\bar j}^{\ \ \bar l k}(e_k\otimes\epsilon^i)\otimes\bar{(e_l\otimes\epsilon^j)},
 	\quad \Omega_{i\bar j}^{\ \ \bar l k}= \Omega_{i\bar j m\bar n}g^{m\bar l}g^{k\bar n},
\]
where $\{e_i\}$ is a local frame of $T^{1,0}M$ and $\{\epsilon^i\}$ is the dual frame.
Symmetries of $\Omega$ imply that this form is Hermitian, i.e., lies in $\Sym^{1,1}(\End(T^{1,0}M))$.
\begin{remark}\label{rm:nakano}
	If $\Omega\in\Sym^{1,1}(\End(T^{1,0}M))$ is positive definite (resp.\,semidefinite), then the tangent bundle $T^{1,0}M$ is said to be \emph{dual-Nakano positive} (resp.\,non-negative),~\cite{de}. Dual-Nakano non-negative metrics exist on all complex homogeneous spaces, see Example~\ref{ex:homogeneous} below.
\end{remark}

\subsection{Algebra of the space of curvature tensors}\label{subsec:algebra}
Let $(V,g)$ be a complex vector space equipped with an Hermitian inner product. We extend $g$ to all associated tensor powers of $V$ and $\bar V$. Denote by $\mf g=\End(V)$ the endomorphism Lie algebra of $V$. Let $\la\cdot,\cdot\ra_{\tr}\colon\mf g\otimes\mf g\to\mf \C$ be the trace pairing
\[
\la u,v\ra_\tr:=\tr(uv).
\]

\begin{definition}
	The space of~\emph{algebraic curvature tensors} on $V$ is the vector space $\Sym^{1,1}(\mf g)$.
\end{definition}
Pairing $\la\cdot,\cdot\ra_\tr$ extends to a bilinear form on $\Sym^{1,1}(\mf g)$ in an obvious way:
\[
\la v\otimes \bar v, u\otimes\bar u\ra_\tr:=|\tr(uv)|^2.
\]
Clearly, $\Omega\in\Sym^{1,1}(\mf g)$ is positive (resp.\,non-negative) if and only if $\la\Omega,u\otimes\bar u\ra_\tr>0$ (resp.\,$\ge 0$) for any nonzero $u\in \mf g$.
\begin{remark}
	For $V=T^{1,0}M$ the space $\Sym^{1,1}(\mf g)$ models the space of Chern curvature tensors. Unlike the Riemannian/K\"ahler setting, where one is interested only in the part of $\Sym^{1,1}(\mf g)$, satisfying the algebraic Bianchi identity, we consider the whole space $\Sym^{1,1}(\mf g)$, since the Chern curvature has less symmetries.	
\end{remark}

There is a natural $\R$-linear adjoint action of $\mf g$ on $\Sym^{1,1}(\mf g)$:
\[
\ad_v(u\otimes \bar u)=[v,u]\otimes\bar u+u\otimes\bar{[v,u]},\quad v,u\in\mf g.
\]
For $\Omega\in \Sym^{1,1}(\mf g)$, let $\{v_i\}$ be an orthonormal basis of $\mf g$, diagonalizing $\Omega$:
\[
\Omega=\sum_i \lambda_i v_i\otimes\bar v_i.
\]
We define two important quadratic operations on the space $\Sym^{1,1}(\mf g)$. 
\begin{itemize}
	\item [$\mathbf{\Omega^2}$:]
	Metric $g$ induces the isomorphism $\iota_g\colon \Omega\mapsto R^\Omega$, mapping $\Omega$ to the corresponding self-adjoint operator $R^\Omega\colon\mf g\to\mf g$. Define $\Omega^2:= \iota_g^{-1}((R^\Omega)^2)$. In the basis $\{v_i\}$ the square of $\Omega$ is given by
	\[
	\Omega^2:=\sum_i \lambda_i^2 v_i\otimes\bar v_i.
	\]	
	Note that $\Omega^2$ is positive semidefinite, i.e., 
	$\la \Omega^2,u\otimes\bar u\ra_\tr\ge 0$ for any $u\in\mf g$. Moreover, $\la \Omega^2,u\otimes\bar u\ra_\tr=0$ if and only if $u\in\mathrm{Ker}\,\Omega$.
	\item [$\mathbf{\Omega^\#}$:] For $v_1\otimes\bar{ w_1},v_2\otimes \bar{w_2}\in \mf g\otimes\bar{\mf g}$ define
	\begin{equation}\label{eq:sharp}
	(v_1\otimes\bar{ w_1})\#(v_2\otimes \bar{w_2})=[v_1, v_2]\otimes\bar{[w_1,w_2]}.
	\end{equation}
	This map gives rise to a bilinear operation $\#\colon  \Sym^{1,1}(\mf g)\otimes \Sym^{1,1}(\mf g)\to \Sym^{1,1}(\mf g)$. Let $\Omega^\#:=1/2(\Omega\#\Omega)$ be the $\#$-square of $\Omega$. In the basis $\{v_i\}$ the $\#$-square of $\Omega$ is given by
	\[
	\Omega^\#=\sum_{i<j}\lambda_i\lambda_j[v_i, v_j]\otimes\bar{[v_i,v_j]}.
	\]
\end{itemize}

Operation $\Omega^\#$ was introduced by Hamilton in~\cite{ha-86}, while studying the evolution equation for the Riemannian curvature tensor under the Ricci flow. In~\cite{us-17}, we used this operation for an arbitrary Lie algebra $\mf g$ to study the HCF on complex homogeneous manifolds $G/H$. Note that $\Omega^\#$ does not depend on the choice of metric on $\mf g$. The following proposition provides coordinate expressions for $\Omega^2$ and $\Omega^\#$.

\begin{proposition}\label{prop:squares_coordinates}
	Let $\{e_m\}$ be a basis of $V$ and $\{\epsilon^m\}$ be the dual basis. For an element $\Omega\in\Sym^{1,1}(\mf g)$
	\[
	\Omega=\Omega_{i\bar j}^{\ \ \bar l k}(e_k\otimes\epsilon^i)\otimes\bar{(e_l\otimes\epsilon^j)}
	\]
	we have
	\begin{equation}
	\begin{split}
		(\Omega^\#)_{i\bar j}^{\ \ \bar l k}&=
		\Omega_{p\bar n}^{\ \ \bar l k}\Omega_{i\bar j}^{\ \ \bar n p}-
		\Omega_{p\bar j}^{\ \ \bar n k}\Omega_{i\bar n}^{\ \ \bar l p},\\
		(\Omega^2)_{i\bar j}^{\ \ \bar l k}&=g^{m\bar n}g_{p\bar s}\Omega_{i\bar n}^{\ \ \bar s k}\Omega_{m\bar j}^{\ \ \bar l p}.
	\end{split}
	\end{equation}
\end{proposition}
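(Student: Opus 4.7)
The two formulas are coordinate expressions for operations whose intrinsic definitions are already in hand, so the plan is straightforward: unwind each definition in the matrix-unit basis $\{e_k\otimes\epsilon^i\}$ of $\mf g$ and verify the identity.

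For $\Omega^\#$, I would substitute $\Omega = \Omega_{i\bar j}^{\ \ \bar l k}(e_k\otimes\epsilon^i)\otimes\bar{(e_l\otimes\epsilon^j)}$ (and a primed copy) into $\Omega^\#=\tfrac12(\Omega\#\Omega)$ and use the matrix-unit commutator
\[
[e_k\otimes\epsilon^i,\,e_{k'}\otimes\epsilon^{i'}] = \delta^{k'}_i\,e_k\otimes\epsilon^{i'} - \delta^k_{i'}\,e_{k'}\otimes\epsilon^i,
\]
together with its conjugate version for the $w$-arguments of $[v_1,v_2]\otimes\bar{[w_1,w_2]}$. Expanding the tensor product of the two brackets produces four monomials in Kronecker deltas; each delta identifies a summation index, and after relabeling the pair of sign-matched terms both give the contribution $\Omega_{p\bar n}^{\ \ \bar l k}\Omega_{i\bar j}^{\ \ \bar n p}$, while the sign-crossed pair both give $-\Omega_{p\bar j}^{\ \ \bar n k}\Omega_{i\bar n}^{\ \ \bar l p}$. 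Summing and dividing by two recovers the asserted formula; no Hermitian symmetry of $\Omega$ beyond the commutativity of scalar multiplication is needed.

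For $\Omega^2$, the spectral viewpoint is cleanest. Choose an $h$-orthonormal basis $\{v_\alpha\}$ of $\mf g$ diagonalizing $\Omega$, so that $\Omega=\sum_\alpha\lambda_\alpha v_\alpha\otimes\bar{v_\alpha}$ and, by the definition $\Omega^2=\iota_g^{-1}((R^\Omega)^2)$, $\Omega^2=\sum_\alpha\lambda_\alpha^2\,v_\alpha\otimes\bar{v_\alpha}$. Writing $v_\alpha=(v_\alpha)^k_i\,e_k\otimes\epsilon^i$, this gives
\[
(\Omega^2)_{i\bar j}^{\ \ \bar l k} = \sum_\alpha\lambda_\alpha^2\,(v_\alpha)^k_i\,\bar{(v_\alpha)^l_j}.
\]
Expanding the claimed right-hand side $g^{m\bar n}g_{p\bar s}\,\Omega_{i\bar n}^{\ \ \bar s k}\Omega_{m\bar j}^{\ \ \bar l p}$ by the same decomposition produces a double sum over $\alpha,\beta$ with coefficient $g^{m\bar n}g_{p\bar s}\,(v_\beta)^p_m\,\bar{(v_\alpha)^s_n}$. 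By the Hermitian symmetry of $g$, this factor is the inner product $h(v_\beta,v_\alpha)=\delta_{\alpha\beta}$, so the double sum collapses to $\sum_\alpha\lambda_\alpha^2(v_\alpha)^k_i\,\bar{(v_\alpha)^l_j}$ and the identity follows.

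The main obstacle is purely notational: one must commit to a convention for how $g$ and $g^{-1}$ raise and lower conjugate indices and keep track of where each factor appears, so that the pattern on the right-hand side of each formula matches the one generated by the definitions. Once that bookkeeping is fixed, both verifications are direct consequences of the matrix-unit commutation relations and the spectral theorem for Hermitian operators on $\mf g$, with no further algebraic input required.
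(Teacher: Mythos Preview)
Your treatment of $\Omega^\#$ is exactly the paper's: expand $\Omega\#\Omega$ on the matrix-unit basis, apply the commutator identity, and collapse the four Kronecker-delta terms into the two stated contractions.

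For $\Omega^2$ your argument is correct but takes a different route. The paper works directly in coordinates: it writes $R^\Omega(e_m\otimes\epsilon^p)=\Omega_{i\bar j}^{\ \ \bar l k}g^{p\bar j}g_{m\bar l}(e_k\otimes\epsilon^i)$, composes $R^\Omega$ with itself, and reads off $(\Omega^2)_{i\bar j}^{\ \ \bar l k}$ from $(R^\Omega)^2$ via $\iota_g^{-1}$. You instead diagonalize $\Omega$ in an $h$-orthonormal basis of $\mf g$ and compare both sides of the claimed identity through the spectral decomposition, using that the cross factor $g^{m\bar n}g_{p\bar s}(v_\beta)^p_m\,\bar{(v_\alpha)^s_n}$ is the induced inner product $h(v_\beta,v_\alpha)=\delta_{\alpha\beta}$. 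Both arguments are short; the paper's has the minor advantage of never invoking the spectral theorem and making the dependence on $g$ explicit at each step, while yours makes the structure $\Omega^2=\sum\lambda_\alpha^2\,v_\alpha\otimes\bar v_\alpha$ visibly compatible with the coordinate formula.
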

\begin{proof}
	For $e_m\otimes\epsilon^n$, $e_p\otimes\epsilon^s\in\mf g$ we have
	\[
		[e_m\otimes\epsilon^n, e_p\otimes\epsilon^s]=\delta_p^ne_m\otimes\epsilon^s-\delta_m^se_p\otimes\epsilon^n.
	\]
	Therefore 
	\begin{equation}
	\begin{split}
		\Omega\#\Omega=&\Omega_{i\bar j}^{\ \ \bar l k}\Omega_{a\bar b}^{\ \ \bar d c}[e_k\otimes\epsilon^i,e_c\otimes\epsilon^a]\otimes\bar{[e_l\otimes\epsilon^j,e_d\otimes\epsilon^b]}\\=&
		\Omega_{i\bar j}^{\ \ \bar l k}\Omega_{a\bar b}^{\ \ \bar d c}
		(\delta_c^ie_k\otimes\epsilon^a-\delta^a_ke_c\otimes\epsilon^i)\otimes
		\bar{(\delta_d^j e_l\otimes\epsilon^b-\delta_l^be_d\otimes\epsilon^j)}.
	\end{split}
	\end{equation}
	After expanding the Kronecker $\delta$'s we get the expression for $\Omega^\#$.
	
	In coordinates, $R^\Omega$ is given by
	\[
	R^\Omega(e_m\otimes\epsilon^p)=\Omega_{i\bar j}^{\ \ \bar l k}g^{p\bar j}g_{m\bar l}(e_k\otimes\epsilon^i).
	\]
	Therefore
	\[
	(R^\Omega)^2(e_m\otimes\epsilon^p)=\Omega_{n\bar j}^{\ \ \bar l s}\Omega_{i\bar r}^{\ \ \bar q k}g^{p\bar j}g_{m\bar l}g_{s\bar q}g^{n\bar r}(e_k\otimes\epsilon^i),
	\]
	which implies the stated formula for $\Omega^2$.
\end{proof}

\subsection{Hermitian curvature flow}
Let $(M,g,J)$ be an Hermitian manifold. Consider an evolution equation for the Hermitian metric $g=g(t)$.
\begin{equation}\label{eq:HCF}
\pd_t g_{i\bar j}=-S^{(2)}_{i\bar j}-Q_{i\bar j},
\end{equation}
where $S^{(2)}_{i\bar j}=g^{m\bar n}\Omega_{m\bar ni\bar j}$ is the \emph{second Chern-Ricci curvature} and $Q_{i\bar j}=\frac{1}{2}g^{m\bar n}g^{p\bar s}T_{mp\bar j}T_{\bar n\bar s i}$ is a quadratic torsion term. Flow~\eqref{eq:HCF} is a member of the family of Hermitian Curvature Flows, introduced by Streets and Tian in~\cite{st-ti-11}. It is proved in~\cite{st-ti-11} that all these flows are defined by strictly parabolic equations for $g$, and, hence, admit short-time solutions. The particular flow~\eqref{eq:HCF} was first considered by the author in~\cite{us-16} (see also~\cite{us-17}). Further we refer to the flow~\eqref{eq:HCF} as the HCF.

With the HCF, the curvature tensor $\Omega$ also evolve along a nonlinear heat-type equation. Precise form of this equation was computed in~\cite{us-16}. Before stating it, let us recall the notion of~\emph{torsion-twisted} connections.

\begin{definition}[Torsion-twisted connections]\label{def:torsion_twisted_connection}
	We define $\n^T, \n^{T^\sharp}$ to be two \emph{torsion-twisted} connections on $TM$ given by the identities
	\begin{equation}\label{eq:torsion_twisted_connection}
	\begin{split}
	&{\n^T}_X Y=\n_X Y - T(X,Y),\\
	&\n^{T^\sharp}_X Y=\n_X Y+g(Y,T(X,\cdot))^\sharp,
	\end{split}
	\end{equation}
	where $X, Y$ are sections of $T M$, $\n$ is the Chern connection and $\sharp\colon T^*M\to TM$ is the musical isomorphism induced by $g$.
	Equivalently, in the coordinates, for a vector field $\xi=\xi^p \frac{\pd}{\pd z^p}$ one has
	\begin{equation}
	\begin{split}
	&\n^T_i \xi^p=\n_i\xi^p - T^p_{ij}\xi^j, \quad
	\n^T_{\bar i}\xi^p=\n_{\bar i}\xi^p,\\
	&\n^{T^\sharp}_i \xi^p=\n_i\xi^p, \quad
	\n^{T^\sharp}_{\bar i}\xi^p=\n_{\bar i}\xi^p + g^{p\bar s}T_{\bar i\bar s j}\xi^j.
	\end{split}
	\end{equation}
	Both connections preserve the operator of almost complex structure; $\n^T$ is compatible with the holomorphic structure, i.e., $(\n^T)^{0,1}=\bar\pd$.
\end{definition}

\begin{remark}\label{rk:torsion_connection_duality}
	In general, metric $g$ is not preserved by either of the connections $\n^T$, $\n^{T^\sharp}$. However, $g$ is parallel with respect to the connection $\n^T\otimes\n^{T^\sharp}$, i.e.,
	for any vector fields $X,Y,Z\in \Gamma(T M)$ we have
	\[
	X\!\cdot\!g(Y,Z)=g(\n^{T}_X Y, Z) + g(Y,\n^{T^\sharp}_{X} Z).
	\]
	In other words, $\n^{T^\sharp}$ is \emph{dual conjugate} to ${\n^T}$ via $g$.
\end{remark}

\begin{definition}[Torsion-twisted connection on the space of curvature tensors]\label{def:torsion_twisted_laplace}
	\emph{Torsion-twisted connection} $\widetilde\n$ on the space $\Lambda^{1,0}M\otimes\Lambda^{0,1}M\otimes\Lambda^{1,0}M\otimes\Lambda^{0,1}M$ acts as $\n^T$ on the first two factors and as $\n^{T^\sharp}$ on the last two factors.
	
	The Laplacian of this connection is defined as
	\[
	\widetilde\Delta:=\frac{1}{2}\sum_{i}(\widetilde\n_{e_i}\widetilde\n_{\bar{e_i}}+\widetilde\n_{\bar{e_i}}\widetilde\n_{e_i}),
	\]
	where local holomorphic fields $\{e_i\}$ form a unitary frame of $T_m^{1,0}M$, $m\in M$. Similarly we define Laplacian $\Delta^T$ for connection $\n^T$.
\end{definition}

Definition~\ref{def:torsion_twisted_laplace} helps to write down the evolution equation for $\Omega$ under the HCF. The next proposition is a compilation of Propositions 3.1, 3.4, and 3.9 of~\cite{us-16}.
\begin{proposition}\label{prop:omega_evolution}
	Let $g=g(t)$ be the solution to the HCF~\eqref{eq:HCF}. Then $\Omega=\Omega_{i\bar jk\bar l}(t)$ satisfies equation
	\begin{equation}\label{eq:omega_evolution}
	\begin{split}
	\pd_t\Omega_{i\bar j k\bar l}=&
	\widetilde\Delta\Omega_{i\bar j k\bar l}+
	\frac{1}{2}g^{m\bar n}g^{p\bar s}\n_i T_{mp\bar l}\n_{\bar j}T_{\bar n\bar sk}+\\
	&+g^{m\bar n}g^{p\bar s}(
		\Omega_{i\bar j m\bar s}\Omega_{p\bar n k\bar l}-
		\Omega_{i\bar n k\bar s}\Omega_{p\bar j m\bar l}+
		\Omega_{i\bar np\bar l}\Omega_{m\bar j k\bar s}
		)-\\
		&-\frac{1}{2}g^{p\bar s}(
			S^{(2)}_{i\bar s}\Omega_{p\bar j k\bar l}+
			S^{(2)}_{p\bar j}\Omega_{i\bar s k\bar l}+
			S^{(2)}_{k\bar s}\Omega_{i\bar j p\bar l}+
			S^{(2)}_{p\bar l}\Omega_{i\bar jk\bar s}
		)-
		g^{p\bar s}(
			Q_{k\bar s}\Omega_{i\bar j p\bar l}+
			Q_{p\bar l}\Omega_{i\bar j k\bar s}
		)+\\
		&+\frac{1}{2}g^{m\bar n}(
			-\n_{\bar n}T_{mi}^p\Omega_{p\bar jk\bar l}
			-\n_m T_{\bar n\bar j}^{\bar s}\Omega_{i\bar sk\bar l}
			+g^{p\bar s}\n_{\bar n}T_{mp\bar l}\Omega_{i\bar j k\bar s}
			+g^{p\bar s}\n_mT_{\bar n\bar s k}\Omega_{i\bar j p\bar l}
		).
	\end{split}
	\end{equation}
\end{proposition}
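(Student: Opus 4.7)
The plan is to differentiate the local coordinate expression
\begin{equation}
\Omega_{i\bar j k\bar l}=-\pd_i\pd_{\bar j}g_{k\bar l}+g^{p\bar s}\pd_{\bar j}g_{p\bar l}\pd_i g_{k\bar s}
\end{equation}
in $t$ after substituting the HCF equation $\pd_t g_{k\bar l}=-S^{(2)}_{k\bar l}-Q_{k\bar l}$, and then reorganise the result tensorially. Working in a frame that is unitary and holomorphic-normal at a point $m\in M$ (so that $g_{i\bar j}|_m=\delta_{ij}$ and $\pd g|_m=0$), all first partials agree with Chern covariant derivatives, and the leading mixed partial $\pd_i\pd_{\bar j}h_{k\bar l}$ of a Hermitian tensor $h$ differs from $\n_i\n_{\bar j}h_{k\bar l}$ by a single schematic curvature correction $\Omega\cdot h$. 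This promotes the formula to a tensorial identity valid in any frame, reducing the task to rewriting $\n_i\n_{\bar j}(S^{(2)}_{k\bar l}+Q_{k\bar l})$ in terms of $\widetilde\Delta\Omega$ and the stated lower-order corrections.

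For the $S^{(2)}$-piece I would expand $\n_i\n_{\bar j}S^{(2)}_{k\bar l}=\n_i\n_{\bar j}(g^{m\bar n}\Omega_{m\bar n k\bar l})$ and reorder the covariant derivatives via the Bianchi identities. Iterated application of $\n_i\Omega_{m\bar n k\bar l}=\n_m\Omega_{i\bar n k\bar l}+T^p_{im}\Omega_{p\bar n k\bar l}$ and its barred analogue converts this trace into the symmetric rough Laplacian $\tfrac12 g^{m\bar n}(\n_m\n_{\bar n}+\n_{\bar n}\n_m)\Omega_{i\bar j k\bar l}$ plus two kinds of correction. The commutator $[\n_m,\n_{\bar n}]$ applied to $\Omega$ produces exactly the quadratic curvature terms
\begin{equation}
g^{m\bar n}g^{p\bar s}\bigl(\Omega_{i\bar j m\bar s}\Omega_{p\bar n k\bar l}-\Omega_{i\bar n k\bar s}\Omega_{p\bar j m\bar l}+\Omega_{i\bar n p\bar l}\Omega_{m\bar j k\bar s}\bigr);
\end{equation}
and the torsion factors picked up from the Bianchi commutations combine with the last two indices of $\Omega$ to promote $\Delta^T$ to the torsion-twisted Laplacian $\widetilde\Delta$ of Definition~\ref{def:torsion_twisted_laplace}, using the dual-conjugacy of $\n^T$ and $\n^{T^\sharp}$ recorded in Remark~\ref{rk:torsion_connection_duality}.

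The $Q$-piece is treated in parallel: expanding $\n_i\n_{\bar j}Q_{k\bar l}$ and shifting one derivative onto each torsion factor extracts the clean quadratic term $\tfrac12 g^{m\bar n}g^{p\bar s}\n_i T_{mp\bar l}\n_{\bar j}T_{\bar n\bar s k}$, while the remaining arrangements produce the $\n T\cdot\Omega$ contributions on the last line of~\eqref{eq:omega_evolution}. Finally, the pieces coming from $\pd_t g^{p\bar s}$, hidden both in the raised indices of $S^{(2)}$ and in the unitary frame implicit in the definition of $\widetilde\Delta$, feed in precisely the linear $S^{(2)}\cdot\Omega$ and $Q\cdot\Omega$ terms on the third line.

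The main obstacle is bookkeeping rather than any single identity: one must verify that the Bianchi corrections, the curvature commutator terms from $[\n_m,\n_{\bar n}]$, and the $\pd_t g^{-1}$-contributions combine \emph{exactly} into~\eqref{eq:omega_evolution} without spurious leftovers. The cleanest organisation, and the one adopted in~\cite{us-16}, is to evaluate each candidate term in normal coordinates at a point, classify it by its index symmetry type, and match coefficients separately for the Laplacian, quadratic-$\Omega$, linear-$\Omega$, and $\n T$-torsion sectors.
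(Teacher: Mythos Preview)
Your outline is correct in structure and names the right ingredients (Bianchi identities for index-swapping, Ricci-type commutators for the quadratic $\Omega$-terms, torsion corrections assembling into $\widetilde\Delta$, and the $\pd_t g^{-1}$ contributions producing the linear $S^{(2)}\!\cdot\Omega$ and $Q\!\cdot\Omega$ pieces). You should note, however, that the paper itself contains no proof of this proposition: it simply records the formula as ``a compilation of Propositions~3.1, 3.4, and 3.9 of~\cite{us-16}''. So there is nothing in the present paper to compare your argument against; what you have written is a sketch of the computation that lives in the cited reference, and indeed you acknowledge this in your final paragraph.

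One small caution on accuracy. You attribute the $\nabla T\!\cdot\Omega$ terms on the last line of~\eqref{eq:omega_evolution} entirely to the expansion of $\nabla_i\nabla_{\bar j}Q_{k\bar l}$. In fact the first two of those four terms (with indices on the first pair $i,\bar j$) arise already from the $S^{(2)}$-piece: when you apply the Bianchi identity $\nabla_i\Omega_{m\bar n k\bar l}=\nabla_m\Omega_{i\bar n k\bar l}+T^p_{im}\Omega_{p\bar n k\bar l}$ and then differentiate the torsion factor, you pick up $\nabla_{\bar n}T^p_{im}\,\Omega_{p\bar j k\bar l}$ and its conjugate. The $Q$-piece contributes the second pair of $\nabla T\!\cdot\Omega$ terms (those acting on the last two indices $k,\bar l$) together with the $Q\!\cdot\Omega$ terms. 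Keeping this attribution straight is exactly the ``bookkeeping'' you warn about, and getting the coefficients $\tfrac12$ versus $1$ right on the third and fourth lines depends on it.
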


In~\cite{us-16} one was interested in the equation for $\pd_t\Omega$ only up to a \emph{first order variation} of $\Omega$ in its arguments. Equation~\eqref{eq:omega_evolution} keeps track of the precise expression for this first order variation. In the form~\eqref{eq:omega_evolution} the evolution equation for $\Omega$ looks unstructured and messy. 
Using Proposition~\ref{prop:omega_evolution} and the the algebraic operations on $\Sym^{1,1}(\mf g)$, we can considerably simplify it.

\begin{proposition}\label{prop:omega_evolution_clear}
	Let $g=g(t)$ be the solution to the HCF~\eqref{eq:HCF}. Then $\Omega\in\Sym^{1,1}(\End(T^{1,0}M))$, $\Omega=\Omega_{i\bar j}^{\ \ \bar l k}(t)$ satisfies equation
	\begin{equation}\label{eq:R_evolution}
		\pd_t \Omega=
		\Delta^T \Omega+
		\Omega^2+
		\Omega^\#+
		D(\n T)+
		\ad_v \Omega,
	\end{equation}
	where
	\begin{enumerate}
		\item[(a)] $D(\n T)_{i\bar j}^{\ \ \bar l k}=1/2
		g^{m\bar n}g^{p\bar s}\n_i T_{mp}^k\n_{\bar j}T_{\bar n\bar s}^{\bar l}$
		\item[(b)] $v_a^b=-\cfrac{1}{2}S^{(4)}_{a\bar s} g^{b\bar s}.$
	\end{enumerate}
\end{proposition}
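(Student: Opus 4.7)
My plan is to verify \eqref{eq:R_evolution} by expanding each algebraic operation on its right-hand side in a local holomorphic frame and matching the resulting coordinate expressions with the corresponding terms in \eqref{eq:omega_evolution} from Proposition~\ref{prop:omega_evolution}, after the latter equation has been raised to the mixed-index form $\Omega_{i\bar j}^{\ \ \bar l k}$ using the time-dependent metric. The passage between the lowered form $\Omega_{i\bar j k\bar l}$ and the mixed form introduces extra $(\pd_t g^{-1})\cdot\Omega$ contributions coming from $\pd_t g^{k\bar n}=g^{k\bar p}(S^{(2)}+Q)_{q\bar p}g^{q\bar n}$, and these must be carried along with everything else.

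The easy identifications come first. Applying Proposition~\ref{prop:squares_coordinates} and then lowering with the metric gives
\[
(\Omega^2+\Omega^\#)_{i\bar j k\bar l}=g^{m\bar n}g^{p\bar s}\bigl(\Omega_{i\bar j m\bar s}\Omega_{p\bar n k\bar l}-\Omega_{i\bar n k\bar s}\Omega_{p\bar j m\bar l}+\Omega_{i\bar n p\bar l}\Omega_{m\bar j k\bar s}\bigr),
\]
which reproduces exactly the three pure quadratic-in-$\Omega$ terms in \eqref{eq:omega_evolution}. The $D(\n T)$ term is literally $\tfrac12 g^{m\bar n}g^{p\bar s}\n_iT_{mp\bar l}\n_{\bar j}T_{\bar n\bar s k}$ after the last two indices are raised.

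Next I would treat the Laplacian. By Definition~\ref{def:torsion_twisted_laplace} the connections $\widetilde\n$ and $\n^T$ differ only on the last two indices of $\Omega$, where $\widetilde\n$ uses $\n^{T^\sharp}$ in place of $\n^T$. From Definition~\ref{def:torsion_twisted_connection} the operator $\n^{T^\sharp}-\n^T$ is a zeroth-order bundle map with coefficients in the torsion, so expanding the composition $\widetilde\n\widetilde\n-\n^T\n^T$ produces a first-order piece of the form ``$T\cdot\n\Omega$'' and a zeroth-order piece of the form ``$\n T\cdot\Omega$''. Using the differential Bianchi identities from Section~\ref{sec:background} to trade one type of $\n\Omega$-derivative for another modulo $T\cdot\Omega$, the Laplacian discrepancy is expressed purely in zeroth-order form and accounts precisely for two of the four $\n T\cdot\Omega$ terms on the last line of \eqref{eq:omega_evolution}.

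The remaining lower-order contributions — the four $S^{(2)}\cdot\Omega$, two $Q\cdot\Omega$, and the remaining two $\n T\cdot\Omega$ terms of \eqref{eq:omega_evolution}, together with the $(\pd_tg^{-1})\cdot\Omega$ corrections produced by raising the last two indices — must be shown to sum to $\ad_v\Omega$ with $v_a^b=-\tfrac12 S^{(4)}_{a\bar s}g^{b\bar s}$. Applying the commutator identity from the proof of Proposition~\ref{prop:squares_coordinates}, $\ad_v\Omega$ decomposes into four coordinate pieces, each placing $v$ on one of the four indices of $\Omega$. The first Bianchi identity $\Omega_{i\bar j k\bar l}=\Omega_{k\bar j i\bar l}+\n_{\bar j}T_{ki\bar l}$ and its conjugate convert the $S^{(4)}$-factors supplied by $v$ into $S^{(2)}$-factors at the cost of $\n T\cdot\Omega$ correction terms; bookkeeping the signs and the metric-variation contributions then shows that all $S^{(2)}$, $Q$, and leftover $\n T$ pieces reorganize exactly into the pattern of $\ad_v\Omega$. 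The main obstacle of the proof is precisely this sign and index bookkeeping: no genuinely new idea enters after Proposition~\ref{prop:squares_coordinates}, and the content is strictly combinatorial.
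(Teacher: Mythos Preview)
Your identification of the $\Omega^2+\Omega^\#$ and $D(\n T)$ terms is correct and matches the paper exactly. The overall strategy of raising \eqref{eq:omega_evolution} and matching term-by-term is also the paper's strategy. The gap is in your handling of the Laplacian.

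You propose to compute the discrepancy $\widetilde\n\widetilde\n-\n^T\n^T$ directly and then invoke the differential Bianchi identities to reduce the resulting $T\cdot\n\Omega$ piece to zeroth order. That step does not work: the differential Bianchi identity $\n_m\Omega_{i\bar j k\bar l}=\n_i\Omega_{m\bar j k\bar l}+T^p_{im}\Omega_{p\bar j k\bar l}$ exchanges one first-order term for another first-order term plus a zeroth-order correction; it cannot eliminate $\n\Omega$ altogether. So your sketch leaves genuine first-order terms unaccounted for, and your claim that the Laplacian discrepancy absorbs ``two of the four $\n T\cdot\Omega$ terms'' is not justified.

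What the paper actually uses is Remark~\ref{rk:torsion_connection_duality}: since $\n^{T^\sharp}$ is the $g$-dual of $\n^T$, raising the last two indices with the metric converts the $\n^{T^\sharp}$-action on the lowered slots into the $\n^T$-action on the raised slots \emph{exactly}. Hence
\[
(\widetilde\n\Omega_{i\bar j m\bar n})\,g^{m\bar l}g^{k\bar n}=\n^T\Omega_{i\bar j}^{\ \ \bar l k},
\]
and consequently $(\widetilde\Delta\Omega_{i\bar j m\bar n})g^{m\bar l}g^{k\bar n}=\Delta^T\Omega_{i\bar j}^{\ \ \bar l k}$ with \emph{no} leftover terms. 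This is the one genuine insight in the proof; everything else really is bookkeeping. With the Laplacian handled this way, \emph{all four} of the $\n T\cdot\Omega$ terms on the last line of \eqref{eq:omega_evolution}, together with the four $S^{(2)}\cdot\Omega$ terms, the two $Q\cdot\Omega$ terms, and the two $(\pd_t g^{-1})\cdot\Omega$ contributions from raising, assemble into $\ad_v\Omega$. Your subsequent use of the first Bianchi identity to convert between $S^{(2)}$ and $S^{(4)}$ is then the correct mechanism for that final bookkeeping step.
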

\begin{proof}
	The proof of this proposition is a matter of straightforward computations. We will not provide these computations in details, but explain the origin of all the summands in~\eqref{eq:R_evolution}.
	
	Recall that $\Omega_{i\bar j}^{\ \ \bar l k}$ is obtained from $\Omega_{i\bar jk\bar l}$ by rising the last two indices. Since the metric $g$ is not preserved by $\n^T$ and $\n^{T^\sharp}$, the derivatives
	\[
	\widetilde{\n} \Omega_{i\bar j}^{\ \ \bar l k}\mbox{ and } 	(\widetilde{\n}\Omega_{i\bar j m\bar n})g^{m\bar l}g^{k\bar n},
	\]
	in general, do not coincide. However, since $\n^T$ and $\n^{T^\sharp}$ are $g$-dual to each other, we have
	\[
	\n^T \Omega_{i\bar j}^{\ \ \bar l k}=	(\widetilde{\n}\Omega_{i\bar j m\bar n})g^{m\bar l}g^{k\bar n}.
	\]
	This explains the presence of the Laplacian $\Delta^T$ in~\eqref{eq:R_evolution}.
	
	Terms quadratic in $\Omega$ in equation~\eqref{eq:omega_evolution} give rise to summands $\Omega^2$ and $\Omega^\#$. It can be easily deduced from Proposition~\ref{prop:squares_coordinates}.
	
	Term $D(\n T)$ trivially comes from the corresponding $\n T$-quadratic term in~\eqref{eq:omega_evolution}.
	
	Finally, the summand $\ad_v\Omega$ comes from the summands of~\eqref{eq:omega_evolution} linear in $\Omega$ plus the terms involving metric derivative: $\Omega_{i\bar j m\bar n}(\pd_t g)^{m\bar l}g^{k\bar n}$ and $\Omega_{i\bar j m\bar n}g^{m\bar l}(\pd_t g)^{k\bar n}$.
\end{proof}

\begin{remark}
	Modulo terms $D(\n T)$ and $\ad_v\Omega$ equation \eqref{eq:R_evolution} coincides with the evolution equation for the Riemannian curvature  under the Ricci flow~\cite{ha-86}.
	This is the only member of a general Streets-Tian's family of flows for which we were able to obtain such a nice evolution equation. It would be interesting to find similar expressions for other versions of the HCF.
\end{remark}

For a subset of the space of algebraic curvature tensors, $X\subset \Sym^{1,1}(\End(T^{1,0}M))$, we write $\Omega\in X$ ($\Omega$ \emph{belongs to $X$}), if $\Omega_m\in X_m $ at any point $m\in M$. We call such an $X$ a~\emph{curvature condition} and say that $\Omega$ \emph{satisfies} $X$.

Let $g=g(t)$ be a solution to the HCF on $M$. Tensor $\Omega^{g(t)}$ satisfies a nonlinear heat-type equation~\eqref{eq:R_evolution} and our aim is to find \emph{invariant curvature conditions} for $\Omega$ under this equation. That are subsets $X\subset \Sym^{1,1}(\End(T^{1,0}M))$ s.t, $\Omega^{g(t)}$ satisfies $X$ for $t>0$, provided $\Omega^{g(0)}$ satisfies $X$. A general approach to this kind of problems was developed by Hamilton in his seminal paper~\cite{ha-86}. This approach is based on \emph{maximum principle for tensors}. In the next section we explain how to modify this principle, so that it will become applicable to equation~\eqref{eq:R_evolution}.

\section{Hamilton's maximum principle}\label{sec:max_principle}

In this section we prove a modification of Hamilton's maximum principle for tensors. Let us start with recalling this principle in its original form. Let $M$ be a closed smooth manifold with a Riemannian metric $g$, and let $E\to M$ be a vector bundle equipped with a metric $h$ and a metric connection $\n^E$. With the use of the Levi-Civita connection we extend $\n^E$ to a connection on $\Lambda^1 M\otimes E$. The Laplacian $\Delta^E\colon \Gamma(M,E)\to\Gamma(M,E)$ is defined as
\[
\Delta^E s:=\tr_g(\n^E\circ\n^E(s)).
\]
Let $\phi(f)$ be a smooth vertical vector field on the total space of $E$. We are interested in the short-time behavior of the solutions to a nonlinear parabolic equation for $f\in C^\infty(M\times[0,\epsilon),\R)$
\begin{equation}\label{eq:PDE_orig}
\frac{d f}{dt}=\Delta^E f+\phi(f),
\end{equation}
where the background data $(h,g,\n^E,\phi)$ is allowed to depend smoothly on $t$.

Recall that a \emph{support} functional for a closed convex set $Y\in\R^N$ at a boundary point $y\in\pd Y$ is a linear function $\alpha\colon \R^N\to \R$, such that $\la\alpha,y\ra\ge\la\alpha,y'\ra$ for any $y'\in Y$. The set of support functionals at $y\in\pd Y$ forms a nonempty closed convex cone in $(\R^N)^*$. The set of support functionals of the unit length (with respect to an underlying metric) will be denoted $\mc S_y$.

Let $X\subset E$ be a subset of the total space of $E$ satisfying the following properties
\begin{enumerate}
	\item[(P1)] $X$ is closed;
	\item[(P2)] the fiber $X_m=X\cap E_m$ over any $m\in M$ is convex;
	\item[(P3)] $X$ is invariant under the parallel transport induced by $\n^E$;
	\item[(P4)] For any boundary point $f\in\pd X_m$ and any support functional $\alpha\in \mc S_f\subset  E_m^*$ at $x$ we have $\la\alpha,\phi(x)\ra\le 0$.
\end{enumerate}
Assume that the initial data $f_0$ lies in $X$, i.e, $f_0(m)\in X_m$ for any $m\in M$. Hamilton's maximum principle states that the set $X$ is invariant under the PDE~\eqref{eq:PDE_orig}, i.e., $f(m,t)$ remains in $X_m$ for $t>0$, as long, as the equation is solvable. Specifically, we have the following results.
\begin{theorem}\label{thm:max_principle_pde_orig}
	If for every fiber $E_m$, $m\in M$, the solutions of the ODE
	\[
		\frac{df}{dt}=\phi(f)
	\]
	remain in $X_m\subset E_m$, then the solutions of the PDE~\eqref{eq:PDE_orig} also remain in~$X$.
\end{theorem}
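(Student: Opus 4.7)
The approach is the classical Hamilton first-contact argument. Observe first that under the convexity and closedness hypotheses (P1)--(P2), the ODE invariance assumption is equivalent, via Nagumo's subtangent theorem, to property (P4): $\la\alpha,\phi(y)\ra\le 0$ for every $y\in\pd X_m$ and every $\alpha\in\mc S_y$. Assume for contradiction that $f$ does not stay in $X$, and let
\[
t_0:=\inf\{t\ge 0\,:\,f(\cdot,t)\not\subset X\}.
\]
By compactness of $M$, the closedness of $X$, and continuity of the solution, $f(\cdot,t_0)\in X$ and there exists a point $m_0\in M$ with $f(m_0,t_0)\in\pd X_{m_0}$.

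Next, I would extend a unit support functional $\alpha_0\in\mc S_{f(m_0,t_0)}\subset E^*_{m_0}$ to a local section $\alpha$ of $E^*$ in a neighborhood of $m_0$ via parallel transport along radial geodesics from $m_0$. Property (P3) ensures that the parallel translate of any $y\in X_m$ back to $m_0$ still lies in $X_{m_0}$, so
\[
\la\alpha(m),y\ra\le\la\alpha_0,f(m_0,t_0)\ra
\]
for $m$ near $m_0$ and $y\in X_m$. Applied to $y=f(m,t_0)\in X_m$, this shows that the scalar $h(m,t):=\la\alpha(m),f(m,t)\ra$ attains a spatial maximum at $m_0$ at time $t_0$; the choice of $t_0$ gives the corresponding temporal maximum at $t_0$ along $m=m_0$.

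At the critical point $m_0$ the radially parallel extension satisfies $\n^E\alpha(m_0)=0$, and a direct computation in geodesic normal coordinates with a parallel frame of $E$ gives $\Delta\la\alpha,f\ra(m_0,t_0)=\la\alpha_0,\Delta^E f(m_0,t_0)\ra$ (the curvature-correction term from the Hessian of $\alpha$ vanishes at $m_0$). Combining with the scalar parabolic maximum principle,
\[
0\le(\pd_t-\Delta)h\big|_{(m_0,t_0)}=\la\alpha_0,\phi(f(m_0,t_0))\ra\le 0,
\]
where the last inequality is (P4). To upgrade this non-strict inequality to an actual contradiction I would invoke the standard perturbation trick: replace $\phi$ by $\phi+\varepsilon\psi$ for a smooth section $\psi$ pointing strictly into $X_m$ at every boundary point, run the first-contact argument above on the perturbed flow to obtain a strict violation $0\le-\varepsilon\delta$, and send $\varepsilon\to 0$.

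The main technical obstacle I expect is the construction of the inward perturbation $\psi$, which must be globally smooth and consistent with (P1)--(P3); a clean realization uses a mollified fiberwise inward normal to $\pd X$, smoothed over a tubular neighborhood and extended trivially away from it, exploiting the convexity of each $X_m$. An alternative that sidesteps the perturbation altogether is to apply a viscosity-type maximum principle directly to the squared fiberwise distance $\rho(m,t):=\mathrm{dist}_{E_m}(f(m,t),X_m)^2$, showing that $\rho$ satisfies $(\pd_t-\Delta)\rho\le C\rho$ as a supersolution with $\rho(\cdot,0)=0$, so that a Gr\"onwall-type comparison forces $\rho\equiv 0$.
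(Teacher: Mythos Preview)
Your first-contact approach is different from the paper's. The paper never isolates a ``first exit time''; instead it works directly with the Lipschitz signed-distance function
\[
\widehat\rho(t)=\sup_{m\in M}\sup_{k\in\pd X_m,\ \alpha\in\mc S_k}\la\alpha,f(m,t)-k\ra
\]
and proves the differential inequality $d\widehat\rho/dt\le C|\widehat\rho|$. This yields $\widehat\rho(t)\le 0$ by elementary ODE comparison, with no perturbation needed. Your ``alternative'' sketch at the end is essentially this argument, but you should be aware that $\widehat\rho$ is only Lipschitz, not $C^1$, so one must use Hamilton's lemma on differential inequalities for Lipschitz functions rather than a classical Gr\"onwall argument.

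Your primary route has a genuine gap at exactly the point you flag. The first-contact computation only produces $0\le \la\alpha_0,\phi(f(m_0,t_0))\ra\le 0$, which is not a contradiction. To rescue it you need a globally defined smooth section $\psi$ pointing \emph{strictly} into $X_m$ at every boundary point. Such a $\psi$ need not exist: if $X_m$ has empty interior (a hyperplane, a ray, a single point) there is no strictly inward direction at all; even when the interior is nonempty, the ``inward normal'' is discontinuous along non-smooth parts of $\pd X_m$, and your proposed mollification over a tubular neighborhood does not obviously produce something strictly inward everywhere. The paper's distance-function approach sidesteps this entirely, which is precisely why it is preferable here.

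Two smaller remarks on your first-contact computation. First, the claimed identity $\Delta\la\alpha,f\ra(m_0)=\la\alpha_0,\Delta^E f(m_0)\ra$ is not correct as stated once you allow an arbitrary connection $\n^{TM}$ on the base (the paper's generality (b)): the scalar Laplace--Beltrami operator and the operator $\tr_g(\n^E\circ\n^E)$ built with $\n^{TM}$ differ by first-order terms. What actually survives---and is all you need---is the inequality $\la\alpha_0,\Delta^E f(m_0)\ra\le 0$: writing $\Delta^E=\sum_i D_i^2$ along $\n^{TM}$-geodesics $\gamma_i$, each $\tau\mapsto\la\alpha(\gamma_i(\tau)),f(\gamma_i(\tau),t_0)\ra$ has a maximum at $\tau=0$ (this uses only (P3), not the metric compatibility of $\n^E$), so its second derivative there is $\le 0$. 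Second, your justification ``the curvature-correction term from the Hessian of $\alpha$ vanishes at $m_0$'' is not right: for a radially parallel extension one has $\n^{E^*}\alpha(m_0)=0$ but $\n^{E^*}\n^{E^*}\alpha(m_0)$ need not vanish. The correct reason is simply the second-derivative test along curves, as above.
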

\begin{lemma}\label{lm:max_principe_ode}
	For a closed convex subset $X_m\subset E_m$ the solution of the ODE
		\[
		\frac{df}{dt}=\phi(f)
		\]
	remains in $X_m$ iff $X_m$ satisfies property (P4).
\end{lemma}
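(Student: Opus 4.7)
The plan is to prove the two directions of the equivalence separately. The necessity of (P4) is an immediate first-order computation: I would pick $f\in\pd X_m$ and $\alpha\in\mc S_f\subset E_m^*$, let $f(t)$ be the ODE solution with $f(0)=f$, and use that $f(t)\in X_m$ for $t\ge 0$ together with the defining property $\la\alpha,f(t)\ra\le\la\alpha,f\ra$ of a support functional. Dividing by $t>0$ and letting $t\to 0^+$ gives $\la\alpha,\phi(f)\ra\le 0$, as required.

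The substantive direction is sufficiency. Assuming (P4), fix a solution $f(t)$ of $\dot f=\phi(f)$ on an interval $[0,\tau)$ with $f(0)\in X_m$. The key tool is the squared-distance function $F(y):=\tfrac12\,\mathrm{dist}(y,X_m)^2$ on the finite-dimensional Hilbert space $(E_m,h_m)$, which by Moreau's theorem is $C^1$ with $\n F(y)=y-P(y)$, where $P\colon E_m\to X_m$ is the $1$-Lipschitz nearest-point projection. My aim is to establish the Gr\"onwall-type inequality
\[
\frac{d}{dt}F(f(t))\le 2L\,F(f(t)),
\]
where $L$ is a local Lipschitz constant of $\phi$; together with $F(f(0))=0$ this forces $F(f(t))\equiv 0$, i.e., $f(t)\in X_m$ throughout $[0,\tau)$. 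To derive the inequality I would split $\phi(f(t))=\phi(P(f(t)))+[\phi(f(t))-\phi(P(f(t)))]$ and pair each summand with $\n F(f(t))=f(t)-P(f(t))$. Cauchy-Schwarz together with the Lipschitz estimate bounds the second summand by $L\,|f(t)-P(f(t))|^2=2L\,F(f(t))$. For the first summand, the crucial observation is that whenever $f(t)\notin X_m$, the unit vector $\alpha_t:=(f(t)-P(f(t)))/|f(t)-P(f(t))|$ is a support functional for $X_m$ at the boundary point $P(f(t))\in\pd X_m$, so hypothesis (P4) yields $\la\alpha_t,\phi(P(f(t)))\ra\le 0$ and this summand contributes non-positively.

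The step I expect to be the most delicate is rigorously differentiating $t\mapsto F(f(t))$: the projection $P$ is only Lipschitz, so one should not try to differentiate through it, but rather invoke $C^1$-regularity of $F$ itself and the chain rule for a $C^1$ function along the smooth curve $f(t)$, which directly yields $\tfrac{d}{dt}F(f(t))=\la f(t)-P(f(t)),\phi(f(t))\ra$ without any pointwise differentiation of $P$. A minor technical point is that $L$ must be taken uniform on a precompact neighborhood of the trajectory over $[0,\tau']$ for each $\tau'<\tau$, which is harmless since the trajectory is continuous and $\tau'$ may be taken arbitrarily close to $\tau$.
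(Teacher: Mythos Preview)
Your proposal is correct. Both directions are sound: the necessity argument is the standard first-order computation, and for sufficiency your use of Moreau's theorem to get $C^1$ regularity of $F(y)=\tfrac12\,\mathrm{dist}(y,X_m)^2$, together with the fact that $(f-P(f))/|f-P(f)|$ is a support functional at $P(f)$, cleanly yields the Gr\"onwall inequality. The care you take about not differentiating $P$ and about choosing $L$ uniformly on a precompact neighborhood of the trajectory is exactly right.

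The paper does not spell out a proof of this lemma; it defers to Hamilton~\cite{ha-86} and offers only the heuristic that invariance of $X_m$ is equivalent to invariance of all the supporting half-spaces $\{x:\la\alpha,x\ra\le\la\alpha,f\ra\}$. The detailed argument the paper does give (for the PDE analogue, Theorem~\ref{thm:max_principle_pde_orig}) implements this via the \emph{signed} distance $\rho=\sup_{k\in\pd X_m,\,\alpha\in\mc S_k}\la\alpha,f-k\ra$ and Hamilton's calculus of differential inequalities for Lipschitz functions. Your route differs only in that you work with the \emph{squared} distance, whose $C^1$ regularity lets you bypass the Lipschitz machinery entirely and apply the ordinary chain rule and Gr\"onwall lemma. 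The trade-off is that the signed-distance formulation generalizes more directly to the PDE setting (where one also takes a sup over $m\in M$), while your squared-distance argument is more self-contained for the ODE case at hand.
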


The proof of both results is based on the fact that the invariance of $X$ under PDE/ODE is equivalent to the invariance of all the `half-spaces' $\{x\in X_m\ |\ \la\alpha,x\ra\le\la\alpha,f\ra\}$ for all $f\in\pd X_m$, $\alpha\in\mc S_f$, $m\in M$.
\medskip

In what follows, we will need these results in a slightly more general setup. Namely, we  
\begin{enumerate}
	\item[(a)] do not assume that connection $\n^E$ preserves the bundle metric $h$;
	\item[(b)] in the definition of $\Delta^E$, allow to use any (not necessarily the Levi-Civita) connection $\n^{TM}$ on $TM$ to extend $\n^E$ to a connection on $\Lambda^1M\otimes E$.
\end{enumerate}
Necessity of this generalization comes from the presence of a non-metric Laplacian $\Delta^T$ in equation~\eqref{eq:R_evolution}.
These modifications do not affect neither the setup nor the original proof of Lemma~\ref{lm:max_principe_ode}, since it depends only on the properties of $X$ in each individual fiber $X_m$. Hence, only the proof of Theorem~\ref{thm:max_principle_pde_orig} requires modifications.

\begin{proof}[Proof of Theorem~\ref{thm:max_principle_pde_orig} in a general setup]
	We will go over the Hamilton's proof of the theorem and point the steps requiring the invariance of $h$ under $\n^E$. In each case we provide the necessary modifications to drop this assumption. As in Hamilton's proof, we will use the basic theory of differential inequalities for Lipschitz functions~\cite[\S 3]{ha-86}.
	
	Denote by $|\cdot|$ the length function induced by $h$ on $E$ and $E^*$. Let $f(m,0)=f_0(m)$ be the initial data with $f_0(m)\in X_m$ for any $m\in M$. Let $f(m,t)$ be the solution to the PDE~\eqref{eq:PDE_orig} on $[0,\epsilon]$ and denote by $B_R=\{e\in E\ |\ |e|\le R\}$ the disk bundle of radius $R$ in $E$.
	
	{\bf Step 1.} Without loss of generality we can assume that $X$ is compact and $\phi$ is compactly supported. Indeed, for $R$ large enough $f(m,t)\in B_R$ for any $m\in M$, $t\in[0,\epsilon]$. Consider $\widetilde{X}=X\cap B_{3R}$ and multiply $\phi(f)$ by a cutoff function, which is supported on $B_{3R}$ and equals 1 on $B_{2R}$. Clearly, if the solution of a new equation on $[0,\epsilon]$ stays in $\widetilde{X}$, then the solution of the initial equation stays in $X$. From now $X$ is compact.
	\begin{remark}
		Unlike the situation in the original proof, with the above modification the set $X$ does not remain invariant under $\n^E$, since $h$ and $B_R$ are not preserved by $\n^E$. However, we still have the following local invariance property on $X\cap B_R$
		\begin{itemize}
			\item[(P$3^*$)] There exists $\delta=\delta(g,h,R)>0$ such that for any path $\gamma(\tau)\in M, \tau\in[0,1]$ of length $<\delta$ and any $s\in \pd X_{\gamma(0)}\cap B_R$ the $\n^E$-parallel transport of $s$ along $\gamma$ lies in $\pd X_{\gamma(1)}$. Moreover, this parallel transport carries support functionals to support functionals.
		\end{itemize}
	\end{remark}
	\bigskip
	
	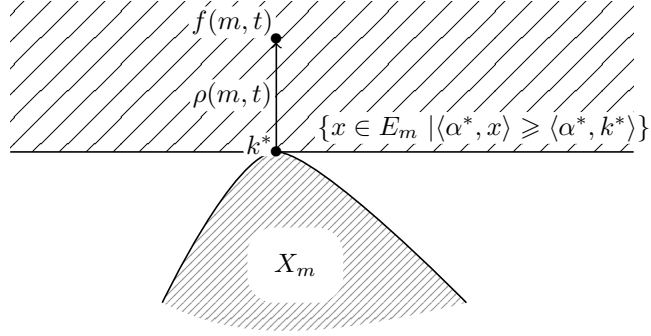
\begin{figure}
		\centering
		\begin{tikzpicture}
		\draw[name path=curve, line width=0.6pt, smooth]plot coordinates {(0,0) (1.5,2) (4,0)};
		\path[name path=line] (0,0) .. controls (1,-0.5) and (2,-0.5) .. (4,0);
		
		\draw[name path=tangent, line width=0.6pt] (-2,2)--(6.2,2);
		
		\fill [pattern=my north east lines] (-2,2) rectangle (6.2,4);
		\node[above right, fill=white,rounded corners=2pt,inner sep=1pt] at (2, 2.1) {$\{x\in E_{m}\ | \la\alpha^*,x\ra\ge\la\alpha^*,k^*\ra\}$};
		\node[left, fill=white,rounded corners=2pt,inner sep=1pt] at (1.5, 2.1) {$k^*$};
		\node at (1.5,2) {\textbullet};
		\node[above left, fill=white,rounded corners=2pt,inner sep=1pt] at (1.5, 3.5) {$f(m,t)$};
		\node at (1.5,3.5) {\textbullet};
		\draw [->,line width=0.6pt] (1.5,2.05) -- (1.5,3.45);
		\node[left, fill=white,rounded corners=2pt,inner sep=1pt] at (1.49, 2.75) {$\rho(m,t)$};
		\begin{pgfonlayer}{bg}
		\fill [pattern=north east lines,pattern color=black!40,
		intersection segments={
			of=curve and line,
			sequence={L2--R2}
		}];
		\end{pgfonlayer}
		
		\node [fill=white,rounded corners=10pt,inner sep=10pt] at (1.75,0.5) {$X_{m}$};
		\end{tikzpicture}
	\caption{Definition of $\rho(m,t)$.}\label{fig:rho}
	\end{figure}
	
	{\bf Step 2.} For a fixed $m\in M$, and $t\in [0,\epsilon]$ define
	\[
	\rho(m,t)=\sup\{\la\alpha,f(m,t)-k\ra\},
	\]
	where the supremum is taken over $k\in\pd X_m$, $\alpha\in \mc S_k$ (i.e., $\alpha$ is a support functional at $k$ and $|\alpha|=1$). Since the domain of this supremum is compact, it is attained at some $\alpha=\alpha^*$, $k=k^*$. By our choice of $R$, point $k^*$ lies in $X_m\cap B_R$.
	If $f(m,t)\not\in X_{m}$, then $\rho(m,t)$ equals the distance from $f(m,t)$ to $\pd X_{m}$ (see Figure~\ref{fig:rho}). Otherwise, if $f(m,t)\in X_m$, then $\rho(m,t)$ equals the negative distance from $f(m,t)$ to $\pd X_{m}$.
	
	Now define
	\[
	\widehat{\rho}(t)=\sup_{m\in M}\rho(m,t).
	\]
	Function $\widehat\rho(t)$ is Lipschitz, and $\widehat\rho(t)\le 0$ (resp.\,$<0$) if and only $f\in X$ (resp.\,$f$ belongs to the interior of $X$). Therefore, to prove Theorem~\ref{thm:max_principle_pde_orig} it is enough to prove that $\widehat\rho(t)\le 0$, provided $\widehat\rho(0)\le 0$. We claim that there exists a constant $C>0$ such that
	\[
	\frac{d\widehat\rho}{dt}\le C|\widehat\rho(t)|.
	\]

	To prove the claim we plug in the definition of $\widehat\rho$ and use~\cite[Lemma 3.5]{ha-86}:
	\[
	\frac{d\widehat\rho}{dt}\le\sup\Bigl\{ \frac{d}{dt}\la\alpha,f(m,t)-k\ra
	\Bigr\},
	\]
	where supremum is taken over $m\in M$, $k\in\pd X_m$, $\alpha\in \mc S_{k}$ such that the maximum of $\la\alpha,f(m,t)-k\ra$ is attained, i.e., $\la\alpha,f(m,t)-k\ra=\widehat\rho(t)$. Together with the equation for $f$ this gives
	\[
	\frac{d\widehat\rho}{dt}\le\sup\{\la\alpha, \Delta^E f+\phi(f)\ra\}=\sup\{\la\alpha, \Delta^E f\ra+\la\alpha,\phi(f)\ra\}.
	\]
	We claim that both summands could be bounded from above by $C|\widehat\rho(t)|$ for some constant $C>0$.

	{\bf Step 2a.} Let $\{e_i\}$ be a $g$-orthonormal frame of $T_mM$. Define $\gamma_i(\tau),i=1,\dots\dim M$, to be the geodesic path of connection $\n^{TM}$ in the direction $e_i$ and denote by $D_i$ the covariant derivative along $\gamma'_i$. Then
	\[
	\Delta^E=\sum_{i=1}^{\dim M} D_i^2.
	\]
	We extend vectors $k\in E_m,\alpha\in E_m^*$ along each of the paths $\gamma_i$ by $\n^E$-parallel transport. By property (P$3^*$), in a small neighborhood of $m\in M$ we still have $k\in\pd X$, and $\alpha$ is a support functional at $k$. Note, however, that in order to get an element in $\mc S_k$ over a point $m_0\neq m$, we need to normalize $\alpha$, since $\n^E$ does not preserve metric $h$; so	$\alpha/|\alpha|\in\mc S_k$. Since point $m\in M$, and the corresponding vectors $k\in\pd X_m$, $\alpha\in \mc S_k$ are chosen in such a way that $\la\alpha,f(m,t)-k\ra$ attains its maximum~--- $\widehat\rho(t)$, the function $\Phi_i(\tau):=\la\alpha/|\alpha|,f(\gamma_i(\tau),t)-k\ra$ is maximal at $\tau=0$. Therefore
	\[
	0=\Phi'_i(0)=(D_i |\alpha|^{-1})\widehat\rho(t)+D_i\la\alpha,f-k\ra;
	\]
	\[
	0\ge \Phi_i''(0)=(D^2_i|\alpha|^{-1})\widehat\rho(t)+2(D_i|\alpha|^{-1})D_i\la\alpha,f-k\ra+\la\alpha,D_i^2f\ra.
	\]
	With the use of the first equation we can rewrite the inequality as
	\[
	\la\alpha,D_i^2f\ra\le -(D^2_i|\alpha|^{-1})\widehat\rho(t)+2(D_i|\alpha|^{-1})^2\widehat\rho(t).
	\]
	Let $C'$ be an upper bound for $\bigl|2(D_i|\alpha|^{-1})^2-D_i^2|\alpha|^{-1}\bigr|$ over $m\in M$, $\alpha\in \{\alpha\in E_m^*\ |\ |\alpha|=1\}$, $e_i\in \{v\in T_mM\ |\ |v|=1\}$. Summing the inequality above over $i=1,\dots,\dim M$, we deduce the inequality
	\[
	\la\alpha,\Delta^Ef\ra\le C|\widehat\rho(t)|,
	\]
	for $C=C'\dim M$, as required.
	\begin{remark}
		In the original proof, the bundle connection $\n^E$ preserves $h$, hence for the $\n^E$-parallel extension of $\alpha$ we have $|\alpha|\equiv 1$, so $\alpha\in\mc S_k$. In particular, we could take $C=0$.
	\end{remark}
	
	{\bf Step 2b.} Recall that $\alpha\in\mc S_k$, therefore by property (P4) $\la\alpha,\phi(k)\ra\le 0$. Hence we have
	\[
	\la\alpha,\phi(f)\ra=\la\alpha,\phi(k)\ra+\la\alpha,\phi(f)-\phi(k)\ra\le\la\alpha,\phi(f)-\phi(k)\ra\le C|f-k|=C|\widehat\rho(t)|,
	\]
	where $C$ is a generic constant bounding the norm of the derivative of $\phi\colon E\to E$.
	
	{\bf Step 3.} Lipschitz function $\widehat\rho(t)$ satisfies initial condition $\widehat\rho(0)\le 0$ and differential inequality $d\widehat\rho/dt\le C|\widehat\rho|$. By a general result~\cite{ha-86}, it implies $\widehat\rho(t)\le 0$ for $t\ge 0$. This is equivalent to the required invariance: $f(m,t)\in X_m$ for any $m\in M$, $t\ge 0$. This proves Theorem~\ref{thm:max_principle_pde_orig}.
	
	With the same reasoning, we can prove a bit more. If $\widehat\rho(0)<0$, then $\widehat\rho(t)\le\widehat\rho(0)e^{-Ct}<0$ for $t\ge0$. Therefore, if $f(m,0)$ lies in the interior of $X$ for all $m\in M$, then the same is true for $f(m,t)$, $t>0$. So, the interior of $X$ is also preserved by the PDE~\eqref{eq:PDE_orig}.
\end{proof}

Theorem~\ref{thm:max_principle_pde_orig} allows to construct many invariant sets $X$ for certain PDEs of the form~\eqref{eq:PDE_orig}. In practice, conditions (P1), (P2), (P3) are satisfied automatically for a wide range of subsets $X\subset E$, while (P4) is the most essential and difficult to verify. In the next section we apply these results to the evolution equation of the Chern curvature under the HCF.

\section{Invariant sets of curvature operators}\label{sec:main}
By the philosophy of Hamilton's maximum principle in order to find invariant sets for a heat-type equation $\pd_t f=\Delta f+\phi(f)$ one has to study an ODE $\pd_t f=\phi(f)$. Following this idea, in this section we study an ODE on the space of algebraic curvature tensors $\Sym^{1,1}(\mf g)$ given by the zero-order part of the equation~\eqref{eq:R_evolution}, where $\mf g=\End(V)$ as in Section~\ref{subsec:algebra}. We construct a family of invariant sets for this ODE by verifying property (P4). 

\subsection{ODE-invariant sets}
Let $\Omega\in\Sym^{1,1}(\mf g)$. Consider an ODE for $\Omega=\Omega(t)$

\begin{equation}\label{eq:ODE_R}
\pd_t \Omega=\Omega^2+\Omega^\#+\ad_v \Omega +AA^*,
\end{equation}
where
\begin{enumerate}
	\item $v$ is an element of $\mf g$;
	\item $AA^*=\sum_i a_i\otimes \bar{a_i}$, for some collection of vectors $\{a_i\in\mf g\}$.
\end{enumerate}
Both $v$ and $A$ are allowed to depend on time. Following the notations of Section~\ref{sec:max_principle}, denote $\phi(\Omega):=\Omega^2+\Omega^\#+\ad_v \Omega +AA^*$.  We describe a family of convex subsets of $\Sym^{1,1}(\mf g)$, for which we are aiming to prove invariance under~\eqref{eq:ODE_R}, and, eventually, the invariance under the HCF.
\medskip

Let $S\subset \mf g$ be a subset invariant under the adjoint action of $G=GL(V)$ and let $F\colon \mf g\to\R$ be a continuous function satisfying the following properties.

\begin{itemize}
	\item[$(\star_1)$] $F$ is $\Ad\,G$-invariant. Since diagonalizable matrices are dense in $\mf g$, $F$ can be though of as a symmetric function in the eigenvalues $\{\mu_i\}$ of $s\in \mf g$;
	\item[($\star_2$)] For any sequence $s_i\in \mf g$ and any $\lambda_i\searrow 0$, such that $\lambda_is_i\to s$, there exists a finite limit
	\[
	\lim_{i\to\infty } F(s_i)\lambda_i^2,
	\]
	and its value depends only on $s$. We denote this limit by $F_\infty(s)$.
\end{itemize}
\begin{definition}\label{def:nice}
	Continuous function $F\colon\mf g\to \R$ satisfying properties $(\star_1)$ and $(\star_2)$ is called~\emph{nice}. There is a function $F_\infty\colon\mf g\to \R$ attached to any nice function.
\end{definition}
Examples of nice $F$ are $F(s)=a|\tr s|^2+b$ with the corresponding limit $F_\infty(s)=a|\tr s|^2$ and $F(s)=\sum_{\mu_i\in\mathrm{spec}(s)} |\mu_i|$ with $F_\infty(s)\equiv 0$. In most of the examples below the only reasonable choice is $F\equiv 0$.

Given a tuple $(S,F)$ we define a subset of $\Sym^{1,1}(\mf g)$:
\[
C(S,F):=\{\Omega\in\Sym^{1,1}(\mf g)\ |\ \la\Omega,s\otimes\bar s\ra_\tr\ge F(s) \mbox{ for all }s\in S\}.
\]
As the intersection of closed halfspaces, the set $C(S,F)$ is closed and convex. Since $S$ and $F$ are $\Ad\,G$-invariant, set $C(S,F)$ is also invariant under the induced action of $G$. We claim that $C(S,F)$ is preserved by ODE~\eqref{eq:ODE_R}. 
\begin{theorem}\label{thm:ode_invariance}
	The set $C(S,F)\subset \Sym^{1,1}(\mf g)$ is closed, convex and satisfies property (P4) for ODE~\eqref{eq:ODE_R}. In particular, by Lemma~\ref{lm:max_principe_ode} set $C(S,F)$ is invariant under this ODE.
\end{theorem}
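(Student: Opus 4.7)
My plan is to establish the three claims of the theorem in turn and then appeal to Lemma~\ref{lm:max_principe_ode} for the invariance conclusion. Closedness and convexity come for free: $C(S,F)$ is, by construction, the intersection $\bigcap_{s\in S}H_s$ of the closed half-spaces $H_s=\{\Omega\in\Sym^{1,1}(\mf g)\ |\ \la\Omega,s\otimes\bar s\ra_\tr\ge F(s)\}$, each of which is closed and convex.

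For property~(P4), I fix a boundary point $\Omega\in\partial C(S,F)$ and describe the normal cone there. It is the closed conic hull of the elementary linear functionals $X\mapsto-\la X,s\otimes\bar s\ra_\tr$ indexed by the \emph{active} constraints, i.e.\ those $s\in S$ with $\la\Omega,s\otimes\bar s\ra_\tr=F(s)$; when $S$ is unbounded, property~$(\star_2)$ provides the additional generators needed from rescaled limits $\lambda_is_i\to s_\infty$, $\lambda_i\searrow 0$, with $F$ replaced by its $\Ad G$-invariant limit $F_\infty$. Since~(P4) is linear in the functional, it suffices to verify, for every such active $s$,
\[
\la\phi(\Omega),s\otimes\bar s\ra_\tr=\la\Omega^2+\Omega^\#+\ad_v\Omega+AA^*,\,s\otimes\bar s\ra_\tr\ge 0.
\]

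Three of the four contributions are short. First, $\la AA^*,s\otimes\bar s\ra_\tr=\sum_i|\tr(a_is)|^2\ge 0$ is immediate. Second, diagonalising $\Omega=\sum_j\lambda_jv_j\otimes\bar{v_j}$ gives $\la\Omega^2,s\otimes\bar s\ra_\tr=\sum_j\lambda_j^2|\tr(v_js)|^2\ge 0$. Third, the $\Ad G$-invariance of $S$ together with $(\star_1)$ forces the real-valued function
\[
\eta_w(t):=\la\Omega,\Ad_{\exp(tw)}s\otimes\overline{\Ad_{\exp(tw)}s}\ra_\tr-F(\Ad_{\exp(tw)}s)
\]
to be non-negative with minimum $0$ at $t=0$ for every $w\in\mf g$; since $F$ is constant along the orbit, the first-order condition $\eta_w'(0)=0$ yields $\la\Omega,\ad_w(s\otimes\bar s)\ra_\tr=0$ for all $w$. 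The trace pairing on $\mf g$ is $\Ad G$-invariant, so $\ad_w$ is skew with respect to the induced pairing on $\Sym^{1,1}(\mf g)$; applying this with $w=v$ converts the previous identity into $\la\ad_v\Omega,s\otimes\bar s\ra_\tr=0$, killing the adjoint contribution exactly.

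The remaining and only genuinely non-trivial step concerns the sharp-square pairing
\[
\la\Omega^\#,s\otimes\bar s\ra_\tr=\sum_{i<j}\lambda_i\lambda_j\,|\tr([v_i,v_j]s)|^2,
\]
whose sign is a priori indefinite; this is where I expect the real work and the main obstacle. Following Wilking~\cite{wi-13}, the plan is to bring in the second-order orbit inequality $\eta_w''(0)\ge 0$, which after unwinding $\ad_w^2(s\otimes\bar s)$ and using skew-symmetry reads $\la\ad_w^2\Omega,s\otimes\bar s\ra_\tr\ge 0$ for every $w$; specialising $w$ to run over the eigenbasis $\{v_k\}$ of $\Omega$ produces a family of quadratic inequalities in the quantities $\tr(v_i s)$ and $\tr([v_i,v_j]s)$. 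A suitable non-negative linear combination of these inequalities, in conjunction with the first-order relations above, yields the required lower bound on $\la\Omega^\#,s\otimes\bar s\ra_\tr$ (and in fact, what is morally the cleanest statement, on $\la\Omega^2+\Omega^\#,s\otimes\bar s\ra_\tr$). Assembling the four contributions verifies~(P4), and Lemma~\ref{lm:max_principe_ode} then gives the invariance of $C(S,F)$ under~\eqref{eq:ODE_R}.
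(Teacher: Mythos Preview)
Your overall strategy matches the paper's: closedness and convexity are immediate, the support cone at a boundary point is generated by the active constraints (together with those coming from $\pd_\infty S$ and $F_\infty$, which the paper treats with some care), and for each active $s$ the four contributions $\Omega^2$, $AA^*$, $\ad_v\Omega$, $\Omega^\#$ are handled separately. Your treatment of the first three coincides with the paper's.

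The gap is in the $\Omega^\#$ step, which you leave as a sketch, and your sketched plan is not the paper's argument. You propose to take $w$ in the eigenbasis $\{v_k\}$ of $\Omega$ and form ``a suitable non-negative linear combination'' of the inequalities $\eta_{v_k}''(0)\ge 0$; it is not clear this works as stated in the complex setting, and in any case you do not say what the combination is. The paper proceeds differently. First it polarizes: summing the second-variation inequality for $x$ and for $\sqrt{-1}x$ cancels the cross terms $\la\Omega,\ad_x^2 s\otimes\bar s\ra_\tr+\la\Omega,s\otimes\overline{\ad_x^2 s}\ra_\tr$ and yields the clean statement that the Hermitian form
\[
Q^\Omega(x,\bar x):=\la\Omega,\ad_x s\otimes\overline{\ad_x s}\ra_\tr
\]
is non-negative for every $x\in\mf g$. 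Then the paper chooses a basis $\{v_i\}$ adapted not to $\Omega$ but to $\ad_s$ and $Q^\Omega$: the tail spans $\ker\ad_s$, the head diagonalizes $Q^\Omega$ with eigenvalues $\mu_k\ge 0$, and the images $w_i=\ad_s v_i$ are orthonormal for the inner product $\tr(ab^*)$. In this basis one computes directly $\la\Omega^\#,s\otimes\bar s\ra_\tr=\tfrac12\sum_k a_{k\bar k}\mu_k$, and then observes $a_{k\bar k}=Q^\Omega(w_k^*,\overline{w_k^*})\ge 0$. This choice of basis, rather than any linear combination of second variations along the eigenvectors of $\Omega$, is the heart of the argument; your sketch does not supply an analogue.
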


Let us first prove a lemma. In this lemma we do not assume that $F$ is continuous. Its proof follows the lines of~\cite{wi-13}.

\begin{lemma}\label{lm:property_P4}
	If $\Omega\in C(S,F)$ and $u\in S$ is such that $\la\Omega,u\otimes\bar u\ra_\tr=F(u)$, then $\la\Omega^2+\Omega^\#+\ad_v \Omega+AA^*,u\otimes\bar u\ra_\tr\ge 0$.
\end{lemma}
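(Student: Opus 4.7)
The plan is to show that each of the four summands of $\phi(\Omega):=\Omega^2+\Omega^\#+\ad_v\Omega+AA^*$ pairs non-negatively with $u\otimes\bar u$. Fix an orthonormal basis $\{v_i\}$ diagonalizing $\Omega$, so that $\Omega=\sum_i\lambda_iv_i\otimes\bar v_i$, and abbreviate $c_i:=\tr(v_iu)$ and $L_{ij}:=\tr([v_i,v_j]u)$. Two contributions are immediate:
\[
\la AA^*,u\otimes\bar u\ra_\tr=\sum_i|\tr(a_iu)|^2\ge 0,\qquad \la\Omega^2,u\otimes\bar u\ra_\tr=\sum_i\lambda_i^2|c_i|^2\ge 0,
\]
the second because $\Omega^2$ is, by its definition through the self-adjoint operator $R^\Omega$, positive semi-definite.

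The adjoint and $\#$ terms are both controlled by the $\Ad\,G$-invariance of $C(S,F)$, which follows from the $\Ad\,G$-invariance of $S$ and $F$. For every $w\in\mf g$ the function $t\mapsto\la e^{tw}\cdot\Omega,u\otimes\bar u\ra_\tr-F(u)$ is non-negative on $\R$ and vanishes at $t=0$. Its first derivative gives the identity $\la\ad_w\Omega,u\otimes\bar u\ra_\tr=0$ for every $w\in\mf g$; setting $w=v$ disposes of the adjoint term, and specializing to $w=v_j$ (and its imaginary counterpart from $iv_j$) yields the orthogonality
\[
\sum_i\lambda_ic_i\bar L_{ij}=0\quad\text{for every }j.
\]

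The second-order Taylor expansion of the same function, averaged with its counterpart in direction $iw$ to kill the real part, produces the fundamental positivity
\[
\la\Omega,[w,u]\otimes\overline{[w,u]}\ra_\tr=\sum_i\lambda_i|\tr(v_i[w,u])|^2\ge 0\quad\text{for every }w\in\mf g.\qquad(\star)
\]
Equivalently, the Hermitian matrix $P_{jk}:=\sum_i\lambda_iL_{ij}\bar L_{ik}$ is positive semi-definite, and a short computation combining the orthogonality above with the antisymmetry $L_{ij}=-L_{ji}$ shows that $PDc=0$, with $D:=\mathrm{diag}(\lambda_i)$; that is, the vector $Dc$ lies in the kernel of $P$.

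The final and hardest step is the Hermitian analogue of Wilking's algebraic identity from \cite{wi-13}:
\[
\la\Omega^2+\Omega^\#,u\otimes\bar u\ra_\tr=\sum_i\lambda_i^2|c_i|^2+\tfrac{1}{2}\sum_{i,j}\lambda_i\lambda_j|L_{ij}|^2\ge 0.
\]
This is where I expect the main obstacle to lie: the $\#$-contribution equals $\tfrac{1}{2}\tr(DP)$, which can be negative when $\Omega$ is indefinite, so one must test $(\star)$ against carefully chosen vectors built from $\{v_j\}$ and the coefficients $c_i$, and then use $PDc=0$ to absorb the off-diagonal cross terms against the diagonal $\sum_i\lambda_i^2|c_i|^2$. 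Once this inequality is established, summing the four contributions yields $\la\phi(\Omega),u\otimes\bar u\ra_\tr\ge 0$, as required.
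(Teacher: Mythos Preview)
Your handling of the $\Omega^2$, $AA^*$, and $\ad_v$ terms is fine and matches the paper, and your derivation of the second-order inequality $(\star)$ is exactly the right move. The gap is entirely in the last step, and it is more than a technicality: your proposed strategy of proving the \emph{combined} bound $\la\Omega^2+\Omega^\#,u\otimes\bar u\ra_\tr\ge 0$, using $PDc=0$ to absorb the $\#$-term against the square term, is a detour with no clear endpoint. The identity $PDc=0$ only controls $P$ in one direction, whereas $\tr(DP)$ involves all diagonal entries of $P$; there is no evident way to trade those against $\sum_i\lambda_i^2|c_i|^2$.

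The point you are missing is that $\la\Omega^\#,u\otimes\bar u\ra_\tr\ge 0$ holds \emph{on its own}, and this becomes visible once you abandon the basis diagonalizing $\Omega$ in favour of a basis adapted to $\ad_u$. Choose $\{v_i\}$ so that $\{v_i\}_{i>r}$ spans $\ker\ad_u$, the vectors $w_i:=[u,v_i]$ for $i\le r$ form an orthonormal basis of $\mathrm{Im}\,\ad_u$ with respect to $\tr(ab^*)$, and the form $Q^\Omega(x,\bar y):=\la\Omega,[u,x]\otimes\bar{[u,y]}\ra_\tr$ is diagonal on $\{v_i\}_{i\le r}$, say $Q^\Omega(v_i,\bar v_j)=\mu_i\delta_{ij}$ with $\mu_i\ge 0$ by $(\star)$. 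Writing $\Omega=\sum a_{i\bar j}v_i\otimes\bar v_j$ in this basis, the cyclicity $\tr(u[v_i,v_k])=\tr([u,v_i]v_k)$ collapses the $\#$-pairing to
\[
\la\Omega^\#,u\otimes\bar u\ra_\tr=\tfrac{1}{2}\sum_{k\le r} a_{k\bar k}\mu_k.
\]
Now the ``carefully chosen test vectors'' you were looking for are simply the adjoints $w_k^*$: plugging $x=w_k^*$ into $(\star)$ and using $\tr(w_iw_k^*)=\delta_{ik}$ gives $a_{k\bar k}=Q^\Omega(w_k^*,\bar{w_k^*})\ge 0$, and the inequality follows. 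The $\Omega^2$ term is never needed here.
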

\begin{proof}
	Recall that by definition of $\Omega^2$ we have
	$\la\Omega^2,u\otimes\bar u\ra_\tr\ge 0$. Similarly $\la AA^*,u\otimes \bar u\ra_\tr=\sum_{i} |\tr(a_iu)|^2\ge 0$. Hence, it remains to prove that $\la\Omega^\#+\ad_v \Omega,u\otimes\bar u\ra_\tr\ge 0$. We claim, that
	\begin{enumerate}
		\item[(C1)] $\la\ad_v\Omega,u\otimes\bar u\ra_\tr=0$,
		\item[(C2)] $\la\Omega^\#,u\otimes\bar u\ra_\tr\ge 0$.
	\end{enumerate}
	Proof of both claims is based on the variation of $\la\Omega,u\otimes\bar u\ra_\tr$ in $u$.
	
	Fix an element $x\in\mf g$ and let $u(\tau)=\exp(\tau\,\ad_x)u$ be an orbit of $u$ induced by a 1-parameter subgroup of $\Ad\,G$. By the invariance of $S$ under the adjoint action, we have $u(\tau)\in S$. Therefore by the definition of $C(S,F)$, the function
	\[
	\Psi(\tau):=\la\Omega,u(\tau)\otimes \bar{u(\tau)}\ra_\tr
	\]
	is bounded below by $F(u)$ and, by our choice of $u$,
	attains this minimum at $\tau=0$. It implies that $\Psi'(0)=0$ and $\Psi''(0)\ge 0$. Specifically,
	\begin{equation}\label{eq:proof_ODE_ineq}
	\begin{split}
		&\la\Omega,\ad_x u\otimes\bar u+u\otimes\bar{\ad_x u}\ra_\tr=0,\\
		&\la\Omega,\ad_x u\otimes \bar{\ad_x u}\ra_\tr+
		\la\Omega,\ad_x\ad_x u\otimes \bar u\ra_\tr+
		\la\Omega,u\otimes \bar{\ad_x \ad_x u}\ra_\tr\ge 0.
	\end{split}
	\end{equation}
	The first identity is equivalent to $\la\ad_x\Omega,u\otimes\bar u\ra_\tr=0$ for all $x\in\mf g$, implying the vanishing of (C1). Now let us prove (C2). After summing up the second line of~\eqref{eq:proof_ODE_ineq} for $x$ and $\sqrt{-1}x$ we arrive at
	\begin{equation}\label{eq:proof_ODE_ineq_basic}
		\la\Omega,\ad_x u\otimes \bar{\ad_x u}\ra_\tr\ge 0.
	\end{equation}
	This inequality holds for any $x\in\mf g$, thus Hermitian form $Q^\Omega(x,\bar x):=\la\Omega,\ad_x u\otimes \bar{\ad_x u}\ra_\tr=\la\Omega,\ad_u x\otimes \bar{\ad_u x}\ra_\tr$ is positive semidefinite.
	
	Let us choose a basis $\{v_i\}_{i=1}^{N}$ of $\mf g$ such that
	\begin{itemize}
		\item $\{v_i\}_{i=r+1}^N$ is a basis of $\mathrm{Ker}\,\ad_u$, so $Q^\Omega(v_i,\bar{\cdot})=0$ for any $r+1\le i\le N$;
		\item $Q^\Omega(v_i,\bar v_j)=\delta_{ij}\mu_i$, $\mu_i\ge 0$, for $1\le i,j\le r$, or, equivalently, the form $\la\Omega,\cdot\otimes\bar{\cdot}\ra_\tr\Bigl|_{\mathrm{Im}\,\ad u}$ is diagonalized in the basis $\{w_i=\ad_u v_i\}_{i=1}^r$;
		\item $\{w_i=\ad_u v_i\}_{i=1}^r$ is an orthonormal basis of $\mathrm{Im}\,\ad_u$ with respect to the inner Hermitian product $\tr(a b^*)$, $a,b\in\mf g$, where $b^*:={\bar b}^\tr$ is the transposed conjugate of $b$ in some fixed basis of $V$. In other words, $\tr(w_iw_j^*)=\delta_{ij}$.
	\end{itemize}
	
	Let $\Omega=\sum_{i,j=1}^N a_{i\bar j}v_i\otimes \bar{v_j}$ be the expression for $\Omega$ in this basis. Then 
	\[
	\Omega^\#=\frac{1}{2}\sum_{i,j,k,l=1}^N a_{i\bar j}a_{k\bar l}[v_i,v_k]\otimes\bar{[v_j,v_l]}.
	\]
	Therefore
	\begin{equation}
	\begin{split}
	\la\Omega^\#,u\otimes\bar u\ra_\tr&=
	\frac{1}{2}\sum_{i,j,k,l=1}^N a_{i\bar j}a_{k\bar l}\tr(u[v_i,v_k])\otimes\bar{\tr(u[v_j,v_l])}\\
	&=\frac{1}{2}\sum_{i,j,k,l=1}^r a_{i\bar j}a_{k\bar l}\tr(v_i[u,v_k])\otimes\bar{\tr(v_j[u,v_l])}\\
	&=\frac{1}{2}\sum_{k,l=1}^r a_{k\bar l}Q^\Omega(v_k,\bar{v_l})=\frac{1}{2}\sum_{k=1}^r a_{k\bar k}\mu_k.
	\end{split}
	\end{equation}
	It remains to show that $a_{k\bar k}\ge 0$: 
	\[
	Q^\Omega(w_k^*,\bar{w_k^*})=\sum_{i,j=1}^r
	a_{i\bar j}\tr([v_i, u]w_k^*)\bar{\tr([v_j,u]w_k^*)}=\sum_{i,j=1}^r
	a_{i\bar j}\tr(w_iw_k^*)\bar{\tr(w_jw_k^*)}=
	a_{k\bar k},
	\] hence $a_{k\bar k}\ge 0$, and $\la\Omega^\#,u\otimes\bar u\ra_\tr=\sum_k a_{k\bar k}\mu_k\ge 0$, as required.
\end{proof}
\begin{proof}[Proof of Theorem~\ref{thm:ode_invariance}]
Now we prove property (P4) for ODE~\eqref{eq:ODE_R} and convex set $C(S,F)$. Let $\bar S$ be the closure of $S$. Clearly $C(S,F)=C(\bar S,F)$, so without loss of generality we can assume that $S=\bar S$. Take a point at the boundary of $C(S,F)$:
\[
y\in \pd C(S,F).
\]
We want to describe the set of support functionals for $C(S,F)$ at $y$. Let $\alpha$ be such a functional and take any $w\in \Sym^{1,1}(\mf g)$ such that $\la \alpha,w\ra>0$. Since $\alpha$ is a support functional, for any $\theta>0$ we have $y+\theta w\not\in C(S,F)$, i.e., there exists $s\in S$ (depending on $w$ and $\theta$) such that 
\[
\la y+\theta w,s\otimes \bar s\ra_\tr<F(s).
\]
Let $\theta_i\searrow 0$ be a monotonically decreasing sequence of real numbers. Choose $s_i\in S$ such that the inequality above holds. Since $y\in C(S,F)$, we have
\[
F(s_i)+\la\theta_i w,s_i\otimes \bar s_i\ra_\tr\le \la y+\theta_i w,s_i\otimes \bar s_i\ra_\tr<F(s_i).
\]
There are two options.
\begin{enumerate}
	\item Some subsequence of $|s_i|$ stays bounded. Then after passing to a subsequence we may assume that $s_i\to s\in S$. In this case we have
	\[
	\la w,s\otimes\bar s\ra_\tr\le 0,\quad \la y,s\otimes\bar s\ra_\tr=F(s),\ \mbox{for some } s\in S.
	\]
	\item $|s_i|\to\infty$. Then after passing to a subsequence we may assume that for some $\lambda_i\searrow 0$ the sequence $\lambda_i s_i$ converges to an element $s$ in the set:
	\begin{equation}
	\pd_\infty S:=\{Y\in\mf g\ |\ \mbox{there exists } \lambda_i\searrow 0, s_i\in S\mbox{ with }\lambda_i s_i\to Y\}.
	\end{equation}
	This set is called \emph{the boundary of $S$ at infinity}. From the definition of $F_\infty$ it is clear that $\Omega\in C(\pd_\infty S,F_\infty)$. In this case we have
	\[
	\la w,s\otimes\bar s\ra_\tr\le 0,\quad \la y,s\otimes\bar s\ra_\tr=F_\infty(s),\ \mbox{for some }s\in \pd_\infty S.
	\]
\end{enumerate}
Inequality $\la w,s\otimes\bar s\ra_\tr\le 0$ is valid in both cases for any $w$ such that $\la \alpha,w\ra>0$. Therefore $\alpha$ cannot be separated in $\Sym^{1,1}(\mf g)^*$ by a hyperplane $\la\alpha,w\ra=0$ from the set of functionals
\[
\mc F_y:=\mc F_y^b\cup\mc F_y^\infty,
\]
where
\[
\mc F_y^b=\{-\la\cdot,s\otimes \bar s\ra_\tr\ |\ s\in S \mbox{ s.t.\,} \la y,s\otimes\bar s\ra_\tr=F(s)\},
\]
\[
\mc F_y^\infty=\{-\la\cdot,s\otimes \bar s\ra_\tr\ |\ s\in \pd_\infty S \mbox{ s.t.\,} \la y,s\otimes\bar s\ra_\tr=F_\infty(s)\}.
\]
Hence, $\alpha$ lies in the convex cone spanned by the elements of $\mc F_y$: 
\[
\alpha\in \mathrm{Cone}(\mc F_y).
\]
Thus, in order to verify property (P4), we need to check that $\la\alpha,\phi(\Omega)\ra\le 0$ for all $\alpha$ in $\mc F_y$. For $\alpha\in\mc F_y^b$ this is exactly the statement of Lemma~\ref{lm:property_P4}. For $\alpha\in\mc F_y^\infty$ this is the statement of Lemma~\ref{lm:property_P4} applied to $C(\pd_\infty S,F_\infty)$ (at this point continuity of $F_\infty$ is not required). This proves property (P4), and, by Lemma~\ref{lm:max_principe_ode}, the invariance of $C(S,F)$ under ODE~\eqref{eq:ODE_R}.
\end{proof}

\subsection{PDE-invariant sets}
With the results of previous subsection we can turn back to PDE~\eqref{eq:R_evolution} satisfied by the Chern curvature tensor $\Omega$ under the HCF on $(M,g,J)$.  First we note that the term
\[D(\n T)_{i\bar j}^{\ \ \bar l k}=1/2
g^{m\bar n}g^{p\bar s}\n_i T_{mp}^k\n_{\bar j}T_{\bar n\bar s}^{\bar l}
\]
is of the form $AA^*$ (see~\eqref{eq:ODE_R}) for
\begin{equation}\label{eq:A_T}
\{a_{(mp)}\}=\{\n_i T_{mp}^ke_k\otimes\epsilon^i\ |\ m<p\}
\end{equation}
in some orthonormal basis $\{e_i\}$.
Hence the zero order part of the PDE for $\Omega$ is a specialization of the right hand side of ODE~\eqref{eq:ODE_R}.
\begin{equation}\label{eq:R_evolution_2}
\pd_t \Omega=
\Delta^T \Omega+
\Omega^2+
\Omega^\#+
D(\n T)+
\ad_v \Omega
\end{equation}
Let $V=\C^{\dim M}$. For every $m\in M$ choose a linear isomorphism $V\simeq T_m^{1,0}M$, with the corresponding isomorphism $\mf g\simeq\End(T_m^{1,0}M)$. Then any $G=GL(V)$-invariant $S\subset \mf g$ canonically corresponds to $S_m\subset \mf \End(T_m^{1,0})$. The set $C(S,F):=\cup_{m\in M} C(S_m,F)\subset \Sym^{1,1}(\End(T^{1,0}M))$ is closed, convex and satisfies (P4). It also satisfies property (P3), since $C(S,F)\subset \End(T^{1,0}M)$ is invariant under the adjoint action of $GL(T^{1,0}M)$, and, therefore is invariant under the action of the holonomy group any connection compatible with $J$. Hence, we can apply Theorem~\ref{thm:max_principle_pde_orig} and conclude that $C(S,F)$ is invariant under~\eqref{eq:R_evolution_2}, proving our main result.
\begin{theorem}\label{thm:main}
	For any $Ad\,G$-invariant subset $S\subset \mf g$ and any nice $F\colon \mf g\to\R$ the curvature condition $C(S,F)$ is preserved by the HCF~\eqref{eq:HCF}.
\end{theorem}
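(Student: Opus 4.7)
The plan is to deduce Theorem~\ref{thm:main} by applying the generalized Hamilton maximum principle (Theorem~\ref{thm:max_principle_pde_orig}) to the evolution equation~\eqref{eq:R_evolution_2} for the Chern curvature $\Omega$, viewed as a section of the vector bundle $E = \Sym^{1,1}(\End(T^{1,0}M))$ over $M$. The Laplacian $\Delta^T$ is built from the torsion-twisted connection which in general does not preserve the metric, so it is essential that the maximum principle was extended in Section~\ref{sec:max_principle} to cover this non-metric setting.

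The candidate invariant subset of $E$ is constructed fiberwise. I would fix an auxiliary complex vector space $V = \C^{\dim M}$; the pair $(S,F)$ then determines a closed convex $\Ad\,G$-invariant subset $C(S,F) \subset \Sym^{1,1}(\mf g)$, where $\mf g = \End(V)$. At each point $m \in M$, any linear isomorphism $V \simeq T_m^{1,0}M$ transports $C(S,F)$ into the fiber $E_m$, and the $\Ad\,G$-invariance of $C(S,F)$ guarantees that the resulting fiberwise subset $X \subset E$ is independent of these choices.

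Next I would verify the hypotheses (P1)--(P4) of the maximum principle for $X$. Properties (P1) and (P2)---closedness and fiberwise convexity---are immediate from the definition of $C(S,F)$ as an intersection of closed half-spaces. For (P3), the $\Ad\,GL(V)$-invariance of $C(S,F)$ ensures that $X$ is preserved under the holonomy of any connection on $E$ induced by a $J$-preserving connection on $T^{1,0}M$; both $\nabla^T$ and $\nabla^{T^\sharp}$ qualify, so the torsion-twisted connection $\widetilde{\nabla}$ from Definition~\ref{def:torsion_twisted_laplace} preserves $X$. The decisive step is (P4). Here the observation is that the torsion contribution $D(\nabla T)$ in~\eqref{eq:R_evolution_2} is of the form $AA^* = \sum_{m<p} a_{(mp)} \otimes \overline{a_{(mp)}}$ with $a_{(mp)} = \nabla_i T_{mp}^k\, e_k \otimes \epsilon^i$ as in~\eqref{eq:A_T}. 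Hence the zero-order part of the PDE is pointwise an instance of the ODE~\eqref{eq:ODE_R}, with the time-dependent data $(v, A)$ extracted from the metric and its torsion, and Theorem~\ref{thm:ode_invariance} already established property (P4) on $C(S,F)$ for that ODE.

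I expect the only subtlety to be bookkeeping: checking that the identifications used to define $X$ are compatible with the way the evolution equation is stated, and verifying that the generalized maximum principle can be invoked despite the Laplacian being built from a non-metric connection---but this was precisely the scope of the extension carried out in Section~\ref{sec:max_principle}. Once those checks are in place, Theorem~\ref{thm:max_principle_pde_orig} yields the invariance of $X$, and hence of the curvature condition $C(S,F)$, under the HCF.
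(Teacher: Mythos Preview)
Your proposal is correct and follows essentially the same route as the paper: identify $D(\nabla T)$ as an $AA^*$ term so that the zero-order part of~\eqref{eq:R_evolution_2} is an instance of the ODE~\eqref{eq:ODE_R}, transplant $C(S,F)$ fiberwise via $\Ad\,GL(V)$-invariance, verify (P1)--(P4), and invoke the generalized maximum principle. One small remark: in the evolution equation~\eqref{eq:R_evolution_2} for $\Omega$ with raised indices the Laplacian is $\Delta^T$, not $\widetilde{\Delta}$, so the connection relevant for (P3) is $\nabla^T$ rather than $\widetilde{\nabla}$; your argument already covers this since $\nabla^T$ is $J$-compatible.
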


\section{Examples}\label{sec:examples}
Let us provide some specific examples of curvature conditions preserved by the HCF. In most of the examples $F\equiv 0$.

\begin{example}[Dual-Nakano non-negativity]
Recall that the Chern curvature $\Omega$ 
is dual-Nakano non-negative, if it represents a non-negative element in $\Sym^{1,1}(\End(T^{1,0}M))$, see Remark~\ref{rm:nakano}. Choose $S=\End(T^{1,0}M)$. Then the cone $C(S,0)$ is the set of dual-Nakano non-negative curvature tensors. By Theorem~\ref{thm:main} this set is preserved by the HCF.
\end{example}

Manifolds admitting Dual-Nakano non-negative hermitian metrics are rather scarce. However, as demonstrates the following example, such metrics exist on all complex homogeneous manifolds.

\begin{example}[Dual-Nakano non-negative metrics on complex homogeneous manifolds]\label{ex:homogeneous}
	Let $M=G/H$ be a complex homogeneous manifold acted on by a complex Lie group $G$. Denote by $\mf g,\mf h$ the corresponding Lie algebras. There is an exact sequence of holomorphic vector bundles
	\[
	0\to \Lambda^{1,0}M\xrightarrow{\iota}\mf g^*\xrightarrow{\pi}\mf h^*\to 0.
	\]
	The \emph{second fundamental form} $\beta\in \Lambda^{1,0}(M,\mathrm{Hom}(\Lambda^{1,0}M,\mf h^*))$ of this exact sequence is given by
	\[
	\beta_\xi(\alpha)=\pi(D_\xi (\iota\alpha))\in\mf h^*,\quad \xi\in T^{1,0}M,\ \alpha\in\Lambda^{1,0}M,
	\]
	where $D$ is the flat connection on the trivialized vector bundle $\mf g^*$. Second fundamental form $\beta$ naturally corresponds to a map $A_\beta\colon\mf h\to\End(T^{1,0}M)$.
	
	Now let $g$ be any Hermitian metric on $\mf g$. It induces an Hermitian metric on $T^{1,0}M$ via projection $\mf g\to T^{1,0}M$. By a standard computation~\cite[\S 14]{de} its curvature $\Omega\in\Sym^{1,1}(\End(T^{1,0}M))$ is given by
	$A_\beta^*A_\beta=\sum_{i}A_\beta(h_i)\otimes\bar{A_\beta(h_i)}$, where $\{h_i\}$ is the orthonormal basis of $\mf h$. This form is clearly non-negative.	
\end{example}
\begin{remark}
	In Example~\ref{ex:homogeneous} for any Hermitian metric $g$ on $\mf g$ we considered the \emph{induced metric} on $M=G/H$. It is proved in~\cite{us-17} that the set of induced metrics is invariant under the HCF.
\end{remark}
\begin{example}[Griffiths non-negativity]
Now we demonstrate preservation of Griffiths non-negativity under the HCF. It was first proved in~\cite{us-16} by adopting the arguments of Mok~\cite{mo-88} and Bando~\cite{ba-84}, who proved the corresponding statement for the K\"ahler-Ricci flow. Below we deduce preservation of Griffiths non-negativity as a particular case of Theorem~\ref{thm:main}. Let us recall the definition.
\begin{definition}
	Chern curvature tensor $\Omega=\Omega_{i\bar j k\bar l}$, considered as a section of $\Lambda^{1,0}M\otimes \Lambda^{0,1}M\otimes \Lambda^{1,0}M\otimes\Lambda^{0,1}M$ is said to be \emph{Griffiths non-negative}, if for any $\xi,\eta\in T^{1,0}M$
	\[
	\Omega(\xi,\bar \xi,\eta,\bar{\eta})\ge 0.
	\]
\end{definition}
In the K\"ahler setting Griffiths positivity is sometimes referred to as positivity of the \emph{holomorphic bisectional curvature}. Some authors use this notion in the Hermitian setting as well, see, e.g.,~\cite{ya-17}. Griffiths positivity implies ampleness of $T^{1,0}M$.

It is easy to see that Chern curvature $\Omega$ considered as a section of $\Sym^{1,1}(\End(T^{1,0}M))$ is Griffiths non-negative if and only if $\Omega\in C(S,0)$, where 
\[
S=\{u\in \End(T^{1,0}M)\ |\ \mathrm{rank}(u)=1\}.
\]
Therefore Griffiths non-negativity is preserved under the HCF.
\end{example}
\begin{example}[Nonnegativity of holomorphic orthogonal bisectional curvature]
\begin{definition}
	Chern curvature tensor $\Omega=\Omega_{i\bar j k\bar l}$, considered as a section of $\Lambda^{1,0}M\otimes \Lambda^{0,1}M\otimes \Lambda^{1,0}M\otimes\Lambda^{0,1}M$ is said to have non-negative \emph{holomorphic orthogonal bisectional curvature}, if for any $\xi,\eta\in T^{1,0}M$ s.t $g(\xi,\bar{\eta})=0$
	\[
	\Omega(\xi,\bar \xi,\eta,\bar{\eta})\ge 0.
	\]
\end{definition}
This curvature condition is a relaxation of Griffiths non-negativity. K\"ahler manifolds admitting non-negative holomorphic orthogonal bisectional curvature were classified by Gu and Zhang~\cite{gu-zha-10}.

Chern curvature $\Omega$ considered as a section of $\Sym^{1,1}(\End(T^{1,0}M))$ has non-negative holomorphic orthogonal bisectional curvature if and only if $\Omega\in C(S,0)$, where
\[
S=\{u\in \End(T^{1,0}M)\ |\ \mathrm{rank}(u)=1,\ \tr(u)=0\}.
\]
Therefore non-negativity is preserved under the HCF.
\end{example}
\begin{example}[Dual $m$-non-negativity] This curvature positivity notion interpolates between Griffiths non-negativity ($m=1$) and dual-Nakano non-negativity $(m=\dim M)$.
\begin{definition}
	Take a number $1\le m\le\dim M$.
	Chern curvature tensor $\Omega\in\Sym^{1,1}(T^{1,0}M)$ is \emph{dual $m$-non-negative} if $\Omega\in C(S_m,0)$, where $S_m=\{u\in\mf g\ |\ \mathrm{rank}(u)=m\}$.
\end{definition}
It is clear from the definition and Theorem~\ref{thm:main} that dual $m$-non-negativity is preserved under the HCF.
\end{example}
\begin{example}[Lower bounds on the second scalar curvature]\label{ex:scalar_curvature}
	It is well-known that under the Ricci flow the lower bound on the scalar curvature is improved, unless the manifold is Ricci-flat. It turns out that the second scalar curvature under the HCF satisfies similar monotonicity.
	Namely, take $S=\{\mathrm{Id}\}\in\End(T^{1,0}M)$. Then for any $q\in\R$ the set $C(S,q)$ is preserved under the HCF. In particular, the infimum of $\la\Omega, \mathrm{Id}\otimes\bar{\mathrm{Id}}\ra_\tr=\Omega_{i\bar j}^{\ \ \bar j i}=\scal$ is nondecreasing.
	
	The same result can be obtained without invoking Hamilton's maximum principle for tensors. Indeed, after contracting equation~\eqref{eq:R_evolution}, we get
	\begin{equation}\label{eq:s_evolution}
	\pd_t \scal=\Delta \scal+|S^{(3)}|^2+\frac{1}{2}|\mathrm{div} T|^2,
	\end{equation}
	where $(\mathrm{div} T)_{jk}=\n_iT_{jk}^i$. The zero-order expression on the right-hand side is non-negative and by the standard maximum principle for parabolic equations the quantity $\inf_M \scal$ is nondecreasing in~$t$.
\end{example}
\section{Applications}\label{sec:applications}
\subsection{Strong maximum principle}\label{sec:strong_maximum_principle}
Hamilton's maximum principle in the form of Theorem~\ref{thm:max_principle_pde_orig} is a variant of a \emph{weak} parabolic maximum principle, i.e., a statement about preservation of a non-strict inequality along a heat-type flow. In many settings a \emph{strong} maximum principle is satisfied. That is a statement characterizing solutions $f(t)$ of \eqref{eq:PDE_orig}, which meet the boundary of a preserved set~$X$ at some $t>0$. We describe a version of the strong maximum principle for Theorem~\ref{thm:main}.

\begin{theorem}\label{thm:br-sc}
	Consider an $\Ad\,G$-invariant $S\subset \mf g$ and a nice function $F\colon \mf g\to \R$. Let $(M,g,J)$ be an Hermitian manifold with metric $g=g(t)$, $t\in[0,\tau)$ evolved along the HCF. Assume that $\Omega^{g(0)}$ satisfies $C(S,F)$. Then for any $t>0$ the set
	\[
	N(t,m):=\{s\in S\ |\ \la\Omega_m^{g(t)},s\otimes\bar s\ra_\tr=F(s)\},\quad m\in M,
	\]
	is invariant under the $\n^T$-parallel transport.	Moreover, if $s_0\in N(t,m)$, then
	\begin{enumerate}
		\item[(a)] $s_0$ belongs to the kernel of $\la\Omega_m^{g(t)},\cdot\otimes \bar{\cdot}\ra_\tr$, in particular, $F(s_0)=0$;
		\item[(b)] $\la \n T(\xi,\eta),s_0\ra_\tr=(s_0)_j^i\n_iT_{kl}^j\xi^k\eta^l=0$ for any $\xi,\eta\in T_m^{1,0}M$.
	\end{enumerate}
\end{theorem}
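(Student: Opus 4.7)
The plan is a Hopf-type strong maximum principle applied to a scalar function built from $\Omega$ and a $\n^T$-parallel section. Fix $t_0 > 0$, $m_0 \in M$, and $s_0 \in N(t_0, m_0)$. Because $S$ is $\Ad G$-invariant and $\n^T$ acts fiberwise on $\End(T^{1,0}M)$ by $GL$-conjugation, $\n^T$-parallel transport along radial $\n^T$-geodesics from $m_0$ extends $s_0$ to a smooth local section $\tilde s_0$ with $\tilde s_0(m) \in S$ and $\n^T\tilde s_0|_{m_0} = 0$; the $\Ad G$-invariance of $F$ makes $F(\tilde s_0(m)) \equiv F(s_0)$ locally constant. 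Consider
$$f(m, t) := \la \Omega_m^{g(t)},\, \tilde s_0(m) \otimes \bar{\tilde s_0(m)}\ra_\tr - F(s_0).$$
By Theorem~\ref{thm:main} applied pointwise with $s = \tilde s_0(m) \in S$, $f \geq 0$, while $f(m_0, t_0) = 0$.

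The key computation is the behavior of $(\pd_t - \Delta^T) f$ at $(m_0, t_0)$. Substituting the evolution equation~\eqref{eq:R_evolution} of Proposition~\ref{prop:omega_evolution_clear} and using $\n^T\tilde s_0|_{m_0} = 0$, the constancy of $F(\tilde s_0)$, and identity (C1) of Lemma~\ref{lm:property_P4} to kill the $\ad_v\Omega$ contribution, the leading term is
$$\la \Omega^2 + \Omega^\# + D(\n T),\, s_0 \otimes \bar{s_0}\ra_\tr,$$
in which every summand is non-negative (by definition of $\Omega^2$, by (C2) of Lemma~\ref{lm:property_P4}, and because $D(\n T) = AA^*$ as in~\eqref{eq:A_T}). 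The bounded error terms that arise away from $m_0$ from the imperfect parallelism of $\tilde s_0$ and the non-metricity of $\n^T$ can be absorbed into a $Cf$-correction using $f \geq 0$, producing a local parabolic differential inequality of the form $(\pd_t - \Delta^T + C) f \geq 0$.

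Hopf's parabolic strong minimum principle then forces $f \equiv 0$ on a spacetime neighborhood of $(m_0, t_0)$. Evaluating at $t = t_0$, one has $\tilde s_0(m) \in N(t_0, m)$ for $m$ near $m_0$; since $\tilde s_0$ is by construction the $\n^T$-parallel transport of $s_0$, this is local $\n^T$-parallel invariance of $N(t_0, \cdot)$, and iteration along arbitrary paths yields the global statement. For the structural properties (a) and (b): once $f \equiv 0$ on a spacetime neighborhood, the pointwise identity of the previous paragraph reads $\la \Omega^2 + \Omega^\# + D(\n T), s_0 \otimes \bar{s_0}\ra_\tr = 0$ at $(m_0, t_0)$, forcing each of the three non-negative summands to vanish individually. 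Vanishing of $\la \Omega^2, s_0 \otimes \bar{s_0}\ra_\tr$ gives $s_0 \in \ker R^\Omega$, equivalently $\la\Omega, s_0 \otimes \bar{\cdot}\ra_\tr \equiv 0$, which is (a) (and in particular $F(s_0) = \la\Omega, s_0 \otimes \bar{s_0}\ra_\tr = 0$). Vanishing of $\la D(\n T), s_0 \otimes \bar{s_0}\ra_\tr = \sum_{m<p}|\tr(a_{(mp)} s_0)|^2$ gives $(s_0)_k^i \,\n_i T_{mp}^k = 0$ for all indices, which is (b).

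The hard part will be the analytic step: establishing the local parabolic differential inequality $(\pd_t - \Delta^T + C)f \geq 0$, which requires redoing the computation of Theorem~\ref{thm:max_principle_pde_orig} in a full neighborhood of $(m_0, t_0)$ and carefully absorbing error terms coming from $\n^T$-non-metricity and from $\tilde s_0$ being only infinitesimally parallel at $m_0$. The algebraic content, by contrast, is completely captured by Lemma~\ref{lm:property_P4}.
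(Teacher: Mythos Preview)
Your outline has a real gap at the step you yourself flag as ``the hard part,'' but the difficulty is not the one you identify. The problem is not the error terms coming from $\n^T$-non-metricity or from $\tilde s_0$ being only infinitesimally parallel; it is that the inequalities (C1) and (C2) from Lemma~\ref{lm:property_P4} hold \emph{only at the exact point where $s_0$ minimizes $s\mapsto\la\Omega,s\otimes\bar s\ra_\tr-F(s)$ over $S$}. At a nearby point $(m,t)$, your extension $\tilde s_0(m)$ is no longer a minimizer, so you have no control on the sign of $\la\Omega^\#,\tilde s_0\otimes\bar{\tilde s_0}\ra_\tr$ or of $\la\ad_v\Omega,\tilde s_0\otimes\bar{\tilde s_0}\ra_\tr$. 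These terms are $O(1)$, not $O(f)$: for instance, along the vertical curve $s(\tau)=e^{\tau\ad_x}\tilde s_0$, the function $\Psi(\tau)=\la\Omega,s(\tau)\otimes\bar{s(\tau)}\ra_\tr-F(s_0)\ge 0$ may well have $\Psi''(0)$ of order $-1$ while $\Psi(0)=f$ is arbitrarily small, so the second-variation estimate underlying (C2) genuinely fails by an amount not bounded by $Cf$. Consequently the local inequality $(\pd_t-\Delta^T+C)f\ge 0$ does not hold, and Hopf's principle cannot be invoked.

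The paper's proof resolves this by working not on $M$ but on the total space of the fiber bundle $S_M\to M$ (one $\Ad\,G$-orbit at a time), with the scalar function $\Phi(s,t)=\la\Omega,s\otimes\bar s\ra_\tr-F(s)$. Then the first and second \emph{vertical} variations of $\Phi$---precisely the quantities that control $\la\ad_v\Omega,s\otimes\bar s\ra_\tr$ and $\la\Omega^\#,s\otimes\bar s\ra_\tr$ in the proof of Lemma~\ref{lm:property_P4}---are available as part of the full gradient and Hessian of $\Phi$ on $S_M$. One obtains a differential inequality on $S_M$ in which the horizontal Laplacian is bounded by the full $\mathrm{Hess}(\Phi)$ and $|\mathrm{grad}(\Phi)|$, exactly the hypothesis of Brendle--Schoen's Proposition~4, which then yields invariance of the zero set under the horizontal ($\n^T$-lifted) vector fields. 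Your argument for parts (a) and (b), by contrast, is fine once the invariance is known: each nonnegative summand in the evolution equation must vanish at a point of $N(t,m)$.
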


This theorem is an extension of Brendle and Schoen's~\cite{br-sc-08} strong maximum principle, which was originally proved for the isotropic curvature evolved under the Ricci flow. A general argument in the context of the Ricci flow was given by Wilking~\cite[A.1]{wi-13}. The same proof works for the HCF with minor modifications. In~\cite[Th.\,5.2]{us-16} this argument was used in the case of Griffiths positivity.
\begin{proof}
We may assume that $S$ is an orbit of the $\Ad\,G$-action on $\mf g$, otherwise, we decompose $S$ into separate orbits and prove the result for each orbit independently. Then $S_M=\cup_m S_m$ defines a smooth fiber bundle $S_M\to M$.

The idea is to treat $N$ as the zero set of the function $\Phi\colon S_M\times[0,\tau)\to \R$, $\Phi(s,t)=\la\Omega^{g(t)},s\otimes\bar s\ra_\tr-F(s)$ and to prove certain differential inequality for $\Phi$, which makes it possible to apply Proposition 4 of~\cite{br-sc-08}.

First, note that by assumption, $\Phi(s,0)$ is non-negative on $S_M$, and by Theorem~\ref{thm:main} the same holds for $t>0$. By the evolution equation for $\Omega$,~\eqref{eq:R_evolution_2}, function $\Phi\colon S_M\to \R$ satisfies equation
\begin{equation}\label{eq:br_sc_proof}
\pd_t\Phi(s,t)=\Delta_{h}\Phi(s,t)+\la \Omega^2,s\otimes\bar s\ra_\tr+\la\Omega^\#,s\otimes\bar s\ra_\tr+\sum_{i<j}|\la \n T_{ij},s\ra_\tr|^2+\la\ad_v\Omega,s\otimes\bar s\ra_\tr,
\end{equation}
where $\Delta_h$ is the horizontal Laplacian of the connection $\n^T$. It is defined as follows. Let $\{X_i\}$ be a $g(t)$-orthonormal collection of vector fields in a neighbourhood of $m\in M$. Denote $Y_i:=\n_{X_i}X_i$, and let $\{\widehat{X_i}\}$, $\{\widehat{Y_i}\}$ be the $\n^T$-horizontal lifts of these vector fields to $S_M$. Then $\Delta_h\Phi(s,t):=\sum_i( \widehat{X_i}\!\cdot\!\widehat{X_i}\!\cdot\!\Phi-\widehat{Y_i}\!\cdot\!\Phi)$. 

The second and the fourth summands on the right hand side of~\eqref{eq:br_sc_proof} are always non-negative. Using the same bounds for the remaining terms, as in~\cite[Th.\,A.1]{wi-13}, we conclude, that on a small relatively compact coordinate neighbourhood of $s\in S_M$, for a positive constant $K$ we have
\[
\sum_i \widehat{X_i}\!\cdot\!\widehat{X_i}\!\cdot\!\Phi\le
-K\inf \{\mathrm{Hess}(\Phi)(a,a)\ \bigl|\ |a|\le 1\}+K|\mathrm{grad}(\Phi)|.
\]
At this point, we can apply~\cite[Prop.\,4]{br-sc-08} and conclude that the zero set of $\Phi$ is invariant under the flow generated by the vector fields $\widehat{X_i}$. Since these fields span the $\n^T$-horizontal subspaces on the fiber bundle $S_M\to M$, we obtain that zeros of $\Phi$ are invariant under the $\n^T$-parallel transport.

Now, let us prove (a) and (b). Let $s_0\in S_M$ be a zero of $\Phi(s_0,t_0)$ for some $t_0>0$. Hence function $\Phi(s_0,t_0)$ attains a local minimum at $s=s_0$, $t=t_0$, so $\pd_t \Phi(s_0,t_0)=0$. As we have proved in Theorem~\ref{thm:main}, all summands on right hand side of~\eqref{eq:br_sc_proof} are non-negative, therefore they must vanish. In particular: (a) $\la \Omega^2,s_0\otimes\bar s_0\ra_\tr=0$, so $s_0\in\mathrm{Ker}\,\Omega$ (see definition of $\Omega^2$); (b) $\la\n T_{ij},s_0\ra_\tr=0$.
\end{proof}

Theorem~\ref{thm:br-sc} implies a more familiar version of maximum principle.
\begin{corollary}
	Consider an $\Ad\,G$-invariant subset $S\subset \mf g$ and a nice function $F\colon \mf g\to \R$. Let $(M,g,J)$ be an Hermitian manifold with metric $g=g(t)$, $t\in[0,\tau)$ evolved along the HCF. Assume that
	$\Omega^{g(0)}$ satisfies $C(S,F)$, and that there exists $m_0\in M$ such that $\Omega^{g(0)}$ satisfies strict inequalities, defining $C(\bar S,F)$ and $C(\pd_\infty S,F_\infty)$, at $m_0$:
	\begin{equation}\label{eq:strong_max_strict_inequalities}
	\begin{split}
		\la\Omega_{m_0}^{g(0)},s\otimes\bar s\ra_\tr&> F(s), \mbox{ for any }s\in \bar{S},\\
		\la\Omega_{m_0}^{g(0)},s\otimes\bar s\ra_\tr&> F_\infty(s), \mbox{ for any }s\in \pd_\infty S.
	\end{split}
	\end{equation}
	Then for any $t\in(0,\tau)$, inequalities~\eqref{eq:strong_max_strict_inequalities} hold everywhere on $M$.
\end{corollary}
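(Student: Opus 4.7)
The plan is to combine Theorem~\ref{thm:br-sc} (parallel transport invariance of the equality set), connectedness of $M$, openness of the strict-inequality region, and the parabolic strong minimum principle applied to a parallel-transported test section.

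Without loss of generality, assume $S=\bar S$. Set
\[
O:=\{y\in\Sym^{1,1}(\mf g) \mid \la y,s\otimes\bar s\ra_\tr>F(s)\ \forall s\in \bar S,\ \la y,s\otimes\bar s\ra_\tr>F_\infty(s)\ \forall s\in\pd_\infty S\}.
\]
The first observation is that $O$ is \emph{open} in $\Sym^{1,1}(\mf g)$. Indeed, by $\Ad G$-invariance we can restrict the defining $s$ to the compact slices $\{|s|=1\}$ of $\bar S$ and $\pd_\infty S$; the scaling behavior $\la y,\mu s\otimes\overline{\mu s}\ra_\tr/|\mu|^2=\la y,s\otimes\bar s\ra_\tr$ together with property $(\star_2)$, which provides $\lim_{|\mu|\to\infty}F(\mu s)/|\mu|^2=F_\infty(s)$, allows one to reduce the conditions on $\bar S$ for large $|s|$ to the conditions on $\pd_\infty S$, yielding uniform control and openness.

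Next, apply Theorem~\ref{thm:br-sc} to both pairs $(\bar S,F)$ and $(\pd_\infty S,F_\infty)$. For each fixed $t>0$, the equality sets
\[
N(t,m):=\{s\in\bar S\mid \la\Omega^{g(t)}_m,s\otimes\bar s\ra_\tr=F(s)\}\cup\{s\in\pd_\infty S\mid \la\Omega^{g(t)}_m,s\otimes\bar s\ra_\tr=F_\infty(s)\}
\]
are invariant under $\n^T$-parallel transport. Since $M$ is connected, this gives the dichotomy: the set $\{m\in M\mid \Omega^{g(t)}_m\notin O\}$ is either empty or all of $M$.

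Now argue by contradiction. Suppose strict inequality fails at some $(t_1,m_1)$ with $t_1>0$; by the dichotomy it fails at $m_0$ as well. Since $\Omega^{g(0)}_{m_0}\in O$ and $O$ is open, continuity of $t\mapsto\Omega^{g(t)}_{m_0}$ produces a first failure time
\[
t_*:=\sup\{t\in[0,\tau)\mid \Omega^{g(s)}_{m_0}\in O\ \forall s\in[0,t]\}\in(0,t_1].
\]
At $t=t_*$, continuity and the closedness of $\Sym^{1,1}(\mf g)\setminus O$ give $\Omega^{g(t_*)}_{m_0}\in\pd O$, so there exists $s_*\in\bar S\cup\pd_\infty S$ realizing equality. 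By Theorem~\ref{thm:br-sc}(a) one has $F(s_*)=0$ (respectively $F_\infty(s_*)=0$).

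The contradiction is obtained via the parabolic strong minimum principle. Choose a simply connected neighborhood $U\ni m_0$, extend $s_*$ to a $\n^T$-parallel section $s_m$ of $\End(T^{1,0}M)|_U$, and set
\[
\Psi(t,m):=\la\Omega^{g(t)}_m,s_m\otimes\bar{s_m}\ra_\tr.
\]
Since $s_m$ lies in the $\Ad G$-orbit of $s_*$, hence in $\bar S$ (resp.\ $\pd_\infty S$), Theorem~\ref{thm:main} applied to $C(\bar S,F)$ (resp.\ $C(\pd_\infty S,F_\infty)$) gives $\Psi\geq 0$ on $[0,\tau)\times U$, with equality at $(t_*,m_0)$. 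A direct computation using the evolution equation~\eqref{eq:R_evolution} together with the parallelism of $s_m$, as in the proof of Theorem~\ref{thm:br-sc}, shows that $\Psi$ satisfies a parabolic differential inequality of the form $\pd_t\Psi\geq\Delta_h\Psi - K(\Psi+|\n\Psi|)$. The classical strong minimum principle applied to such $\Psi$ on $[0,t_*]\times U$ forces $\Psi\equiv 0$ on all of $[0,t_*]\times U$; in particular $\Psi(0,m_0)=\la\Omega^{g(0)}_{m_0},s_*\otimes\bar{s_*}\ra_\tr=0$. This contradicts the strict inequality hypothesis at $(0,m_0)$ and completes the proof.

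The main obstacle lies in the last step: producing the precise parabolic inequality for the parallel-transported test function $\Psi$ and verifying that the lower order terms arising from the non-metric connection $\n^T$, the twist $\ad_v\Omega$, and $D(\n T)$, do not spoil the non-negativity needed to apply the strong minimum principle. This essentially reuses the bounds established in the proof of Theorem~\ref{thm:br-sc}.
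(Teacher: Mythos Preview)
Your approach is essentially correct but takes a harder route than the paper. Both arguments share the first two ingredients: the identification of the strict-inequality region with the interior of $C(S,F)$ (your openness claim for $O$), and the dichotomy coming from $\n^T$-parallel-transport invariance of $N(t,m)$ in Theorem~\ref{thm:br-sc}. The divergence is in how one passes from ``strict at $m_0$ at $t=0$'' to ``strict everywhere for all $t>0$''.

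The paper argues forward: by continuity in $t$, $\Omega^{g(t)}_{m_0}$ stays in the interior of $C(S,F)$ for $t\in(0,t_\varepsilon)$; the dichotomy then gives $\Omega^{g(t)}_m$ in the interior for \emph{all} $m\in M$ on that interval; finally, the remark at the end of the proof of Theorem~\ref{thm:max_principle_pde_orig} (interior of a preserved convex set is itself preserved) propagates this to every $t\in(0,\tau)$. No new parabolic inequality is needed---everything is already in place.

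Your contradiction route, by contrast, asks for a backward-in-time strong minimum principle applied to $\Psi(t,m)=\la\Omega^{g(t)}_m,s_m\otimes\bar{s_m}\ra_\tr$. This is where the genuine extra work sits, and you correctly flag it. Two technical points deserve care: (i) the section $s_m$ is $\n^T$-parallel for the connection at the \emph{fixed} time at which you transport, while the evolution equation~\eqref{eq:R_evolution} involves $\Delta^T$ at the \emph{running} time $t$, so there are additional first-order terms from $(\n^T(t)-\n^T(t_*))s_m$ that must be absorbed into the $K(\Psi+|\n\Psi|)$ error; (ii) the bounds quoted from the proof of Theorem~\ref{thm:br-sc} are stated for the Hessian/gradient on the total space $S_M$ (i.e.\ allowing vertical variation of $s$), not directly for your horizontal test function $\Psi$, so a small translation is needed. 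None of this is fatal, but the paper's forward argument sidesteps it entirely.
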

\begin{proof}
	We claim that $\Omega_{m}$ lies in the interior of $C(S,F)$ if and only if $\Omega_m$ satisfies inequalities~\eqref{eq:strong_max_strict_inequalities}.
	Indeed, if $\Omega$ lies on the boundary of $C(S,F)$, then, following the proof of Theorem~\ref{thm:ode_invariance}, we can find a support functional of the form $\la\cdot,s\otimes s\ra_\tr$, $s\in \bar S\cup\pd_\infty S$, such that in~\eqref{eq:strong_max_strict_inequalities} we have equality. Conversely, if for some $s\in \bar S\cup\pd_\infty S$ we have equality in~\eqref{eq:strong_max_strict_inequalities}, then there is $y\in\Sym^{1,1}(\End(T_m^{1,0}M))$ arbitrary close to $\Omega_{m}$, such that $y\not\in C(S,F)$. So, $\Omega_{m}\in\pd C(S,F)$.
	
	Pick $t_\varepsilon>0$ such that $\Omega_{m_0}^{g(t)}$ still lies in the interior of $C(S,F)$ for $t\in (0,t_\varepsilon)$. Fix any $t\in(0,t_\varepsilon)$.
	By Theorem~\ref{thm:br-sc} the set $N(t,m)$ for $(\bar S,F)$ is invariant under $\n^T$-parallel transport. At the same time, $N(t,m_0)$ is empty, therefore for any $m\in M$ the set $N(t,m)$ is empty as well, i.e., $\la\Omega^{g(t_\varepsilon)},s\otimes\bar s\ra_\tr>F(s)$ for any $s\in\bar S$ everywhere on $M$. Applying the same reasoning to $(\pd_\infty S,F_\infty)$, we conclude that $\la\Omega^{g(t_\varepsilon)},s\otimes\bar s\ra_\tr>F_\infty(s)$ for any $s\in\pd_\infty S$ everywhere on $M$.
	
	Hence, $\Omega^{g(t)}$, lies in the interior of $C(S,F)$. By the proof of Hamilton's maximum principle, $\Omega$ remains in the interior of the convex set $C(S,F)$ for all $t\in(0,\tau)$.
\end{proof}

\subsection{Monotonicity under HCF}
Theorem~\ref{thm:main} allows to produce many monotonic quantities for the HCF on $(M,g,J)$. Let $S\subset \mf g$ be a closed scale-invariant, $\Ad\,G$-invariant subset. Define
\[
\mu(S,g):=\max\{\mu\in\R\ |\ \Omega\in C(S,F),\ F(s)=\mu|\tr s|^2\}\in\R\cup\{\pm\infty\},
\]
where $\max\{\varnothing\}:=-\infty$.
\begin{proposition}\label{prop:monotonicity}
	Let $g=g(t)$ be the solution to the HCF on $(M,g,J)$. Then for any $S\subset \mf g$ as above the quantity $\mu(S,g(t))$ is non-decreasing along the HCF. Moreover, $\mu(S,g(t))>\mu(S,g(0))$ for $t>0$, unless $\mu(S,g(0))\in\{-\infty,0,+\infty\}$.
\end{proposition}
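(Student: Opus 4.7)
The plan separates the nondecreasing assertion from the strict improvement: Theorem~\ref{thm:main} handles the former, and the strong maximum principle (Theorem~\ref{thm:br-sc}) the latter.

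\textbf{Monotonicity.} Set $\mu_0 := \mu(S,g(0))$. If $\mu_0 = -\infty$ there is nothing to show. Otherwise $F(s) := \mu_0 |\tr s|^2$ is nice in the sense of Definition~\ref{def:nice} ($\Ad\,G$-invariant, symmetric in eigenvalues, and quadratic in $s$, so $F_\infty = F$), and by Theorem~\ref{thm:main} the set $C(S,F)$ is preserved by the HCF. Hence $\Omega^{g(t)} \in C(S,F)$ for all $t > 0$, which by the definition of $\mu$ gives $\mu(S,g(t)) \geq \mu_0$.

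\textbf{Strict improvement when $\mu_0 \in \R \setminus \{0\}$.} I argue by contradiction, assuming $\mu(S,g(t_1)) = \mu_0$ at some $t_1 > 0$. By the definition of $\mu$ as a maximum and the scale-invariance of $S$, for each $n \geq 1$ one finds $s_n \in S$, $m_n \in M$ with $|\tr s_n| = 1$ and
\[
\mu_0 \;\leq\; \la \Omega^{g(t_1)}_{m_n}, s_n \otimes \bar s_n\ra_\tr \;<\; \mu_0 + 1/n.
\]
Pass to $m_n \to m_0 \in M$ by compactness and split on whether $|s_n|$ stays bounded. If it does, extract a subsequence with $s_n \to s_0 \in \bar S$ and $|\tr s_0| = 1$; the limit then yields $\la \Omega^{g(t_1)}_{m_0}, s_0 \otimes \bar s_0\ra_\tr = \mu_0 = F(s_0)$, so $s_0 \in N(t_1,m_0)$, and Theorem~\ref{thm:br-sc}(a) forces $F(s_0) = 0$, i.e.\ $\mu_0 = 0$, contradicting $\mu_0 \neq 0$.

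\textbf{The main obstacle} is the remaining case $|s_n| \to \infty$. Setting $Y_n := s_n/|s_n|$, a subsequence converges to some $Y \in \partial_\infty S$ with $|Y| = 1$ and $\tr Y = 0$; dividing the displayed inequality by $|s_n|^2$ only yields $\la \Omega^{g(t_1)}_{m_0}, Y \otimes \bar Y\ra_\tr = 0 = F_\infty(Y)$, and Theorem~\ref{thm:br-sc}(a) applied to $(\partial_\infty S, F_\infty)$ merely recovers the vacuous identity $F_\infty(Y) = 0$. To close this case I plan to exploit the sharper asymptotic $|s_n|^2 \la\Omega^{g(t_1)}, Y_n \otimes \bar Y_n\ra_\tr \to \mu_0 \neq 0$: since $Y$ lies in the kernel of the operator $R^{\Omega^{g(t_1)}_{m_0}}$ by part (a) applied to $(\partial_\infty S, F_\infty)$, the linear cross-terms in the decomposition $s_n = |s_n|Y + \xi_n$ vanish, and combined with Theorem~\ref{thm:br-sc}(b) (the identity $\la \n T(\cdot,\cdot), Y\ra_\tr = 0$) this should isolate a bounded "perpendicular" sequence $\xi_n$ with $|\tr \xi_n| = 1$ satisfying $\la \Omega^{g(t_1)}_{m_n}, \xi_n \otimes \bar\xi_n\ra_\tr \to \mu_0$. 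Passing to a limit and reapplying part~(a) of Theorem~\ref{thm:br-sc} should then force $\mu_0 = 0$, the desired contradiction.
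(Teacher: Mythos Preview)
Your monotonicity argument is correct and identical to the paper's.

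For the strict-improvement clause the paper takes a much shorter route that sidesteps your obstacle. Rather than normalizing $|\tr s_n|=1$, the paper exploits the assumed \emph{closedness and scale-invariance} of $S$ (stated just above the Proposition) to renormalize so that $|s_n|=1$; compactness is then automatic, a subsequential limit $s_0\in S$ with $|s_0|=1$ exists, and one lands immediately in $N(t,m_0)\neq\varnothing$. Invoking Theorem~\ref{thm:br-sc}(a) gives $F(s_0)=\mu_0|\tr s_0|^2=0$, from which the paper concludes $\mu_0=0$. Your bounded case is exactly this argument, and is in fact slightly sharper since your normalization guarantees $|\tr s_0|=1$; your ``unbounded'' obstacle corresponds precisely to the residual possibility $\tr s_0=0$ under the paper's normalization, which the paper does not spell out.

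Your proposed resolution of the obstacle, however, does not work as written. The decomposition $s_n=|s_n|Y+\xi_n$ has three problems. First and most seriously, $\xi_n=s_n-|s_n|Y$ need not lie in $S$: the hypotheses on $S$ (closed, $\Ad\,G$-invariant, scale-invariant) do not make it a linear subspace, so even if $\xi_n\to\xi_0$ you cannot re-apply Theorem~\ref{thm:br-sc} to $\xi_0$. Second, there is no reason $\xi_n$ should stay bounded: writing $Y_n=s_n/|s_n|$ one has $\xi_n=|s_n|(Y_n-Y)$ with $|s_n|\to\infty$ and $|Y_n-Y|\to0$, so $|\xi_n|$ is a~priori indeterminate. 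Third, the kernel property $R^{\Omega_{m_0}}(Y)=0$ furnished by Theorem~\ref{thm:br-sc}(a) holds only at the limiting point $m_0$, not at $m_n$, so the cross terms $|s_n|\,\la\Omega_{m_n},Y\otimes\bar{\xi_n}\ra_\tr$ need not vanish and may dominate; part~(b) of the theorem concerns $\n T$, not $\Omega$, and does not control them.
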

\begin{proof}
	If $\Omega^{g(0)}$ satisfies $C(S,\mu|\tr s|^2)$, then by Theorem~\ref{thm:main} $\Omega^{g(t)}$ also satisfies $C(S,\mu|\tr s|^2)$. Hence $\mu(S,g(t))\ge \mu(S,g(0))$. 
	
	Now assume $\mu(S,g(0))\not\in\{-\infty,0,+\infty\}$, but $\mu(S,g(t))=\mu(S,g(0))=\mu$. Therefore for any $\varepsilon_i>0$ we have $\Omega^{g(t)}\not\in C(S,(\mu-\varepsilon_i)|\tr s|^2)$. Letting $\varepsilon_i\searrow 0$, we conclude that $\Omega^{g(t)}$ hits the boundary of $C(S,\mu|\tr s|^2)$ (here we are using closedness and scale-invariance of $S$). Therefore, in notations of Theorem~\ref{thm:br-sc}, $N(t,m)\neq \varnothing$. By Theorem~\ref{thm:br-sc} (a), it can happen only if $\mu=0$.
\end{proof}
A similar statement is valid for other one-parametric monotonic family of functions $F(s)$.
\medskip

For $S=\{\mathrm{\lambda\,Id\ |\ \lambda\in\R}\}$ Proposition~\ref{prop:monotonicity} gives the monotonicity of the lower bound for $\scal$, see Example~\ref{ex:scalar_curvature}. Of course, this monotonicity can be deduced by applying the usual parabolic maximum principle to the equation
\begin{equation}\label{eq:s_hat_equation}
\pd_t\scal=\Delta\scal+|S^{(3)}|^2+\frac{1}{2}|\mathrm{div}\,T|^2.
\end{equation}
Note that $\inf_M\scal$ is strictly increasing, unless $\scal\equiv 0$, and $S^{(3)}=0$, $\mathrm{div}\,T=0$.

This monotonicity with the vanishing of $S^{(3)}$ and $\mathrm{div}\,T=0$ have important consequences for the understanding of \emph{periodic} and \emph{stationary solutions} to the HCF. In the K\"ahler setting the metrics fixed by the K\"ahler-Ricci flow are tautologically the Ricci flat (Calabi-Yau) metrics. At first glance the situation with the HCF is much more subtle, since vanishing of the term $S^{(2)}+Q$ in~\eqref{eq:HCF} does not have any clear cohomological interpretation. Surprisingly, we still can conclude that $c_1(M)=0$.

More generally, assume that $g=g(t)$ is a \emph{periodic} solution to the HCF~\eqref{eq:HCF} on some time interval $[0,t_{\max})$, i.e., $g(0)=g(T)$ for some $0<T<t_{\max}$.
Hence $\inf_M\scal$ is constant on $[0,T]$, and by the discussion above it follows $\mathrm{div}\,T=0$, $S^{(3)}=0$. We claim that the vanishing of $S^{(3)}$ and $\mathrm{div}\,T$ imply that $c_1(M)=0$ in $H^2(M,\C)$. Indeed, for the first Chern-Ricci form
\[
	\rho:=\sqrt{-1}S^{(1)}_{k\bar s}dz^k\wedge d\bar{z^s},
\]
we have $[\rho]=2\pi c_1(M)$. Now the claim follows from a simple lemma.
\begin{lemma}
	Differential 2-forms $\rho$ and 
	\[
	\rho^T:=
	\frac{\sqrt{-1}}{2}(\mathrm{div}\,T)_{jk}dz^j\wedge dz^k
	+
	\sqrt{-1}S^{(3)}_{k\bar s}dz^k\wedge d\bar{z^s}
	\]
	are cohomologous.
\end{lemma}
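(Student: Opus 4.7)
The plan is to construct an explicit $(1,0)$-form $\alpha$ satisfying $d\alpha = \rho - \rho^T$. The natural choice is $\alpha = \sqrt{-1}\,\tau$, where $\tau = \tau_i\,dz^i$ is the torsion trace $(1,0)$-form with components $\tau_i := T^p_{ip} = g^{p\bar l} T_{ip\bar l}$. Since $\rho^T$ has no $(0,2)$-part, matching $d\alpha = \sqrt{-1}\,\pd\tau + \sqrt{-1}\,\bar\pd\tau$ against $\rho - \rho^T$ decouples into matching the $(1,1)$- and $(2,0)$-components separately.

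For the $(1,1)$-part, I work in a unitary frame at a point, where $S^{(1)}_{i\bar j} - S^{(3)}_{i\bar j} = \sum_m(\Omega_{i\bar j m\bar m} - \Omega_{m\bar j i\bar m})$. The Bianchi identity $\Omega_{i\bar j k\bar l} = \Omega_{k\bar j i\bar l} + \n_{\bar j}T_{ki\bar l}$ reduces this to $\sum_m \n_{\bar j}T_{mi\bar m}$; pulling $\n_{\bar j}$ outside the sum (legal since $\n g = 0$) and using $\sum_m T_{mi\bar m} = -\tau_i$ (from $T^m_{mi} = -T^m_{im} = -\tau_i$), one gets $-\n_{\bar j}\tau_i = -\pd_{\bar j}\tau_i$ (the last step using that $\tau$ is of type $(1,0)$ and $\n^{0,1} = \bar\pd$). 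By tensoriality the identity $(S^{(1)}-S^{(3)})_{k\bar s} = -\pd_{\bar s}\tau_k$ holds on all of $M$, and hence $\sqrt{-1}\,\bar\pd\tau = -\sqrt{-1}\,\pd_{\bar s}\tau_k\,dz^k\wedge d\bar z^s = (\rho-\rho^T)^{1,1}$.

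For the $(2,0)$-part, I contract the torsion Bianchi identity of Proposition~1 on its upper index by setting $l = i$ and summing over $i$. Using $T^i_{ij} = -\tau_j$, $T^i_{ki} = \tau_k$, and converting $\n_k\tau_j - \n_j\tau_k$ to $\pd_k\tau_j - \pd_j\tau_k - T^l_{kj}\tau_l$ via the definition of the Chern covariant derivative, the linear-in-$\tau$ terms $-T^l_{kj}\tau_l$ and $-T^p_{jk}\tau_p$ cancel by antisymmetry of $T$, leaving $(\mathrm{div}\,T)_{jk} = \pd_k\tau_j - \pd_j\tau_k + Q_{jk}$, where $Q_{jk} := T^p_{ij}T^i_{kp} + T^p_{ki}T^i_{jp}$ is a bilinear residue. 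After renaming the dummy indices $i \leftrightarrow p$ in the first summand and using the antisymmetry $T^p_{ij} = -T^p_{ji}$, one finds $T^p_{ij}T^i_{kp} = -T^p_{ki}T^i_{jp}$; hence $Q \equiv 0$, and $(\mathrm{div}\,T)_{jk} = \pd_k\tau_j - \pd_j\tau_k$ identically. This gives $\sqrt{-1}\,\pd\tau = -\tfrac{\sqrt{-1}}{2}(\mathrm{div}\,T)_{jk}\,dz^j\wedge dz^k = (\rho-\rho^T)^{2,0}$.

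Combining the two steps verifies $d(\sqrt{-1}\,\tau) = \rho - \rho^T$, proving the lemma. The only genuine subtlety is the cancellation of the bilinear residue $Q$; everything else is routine index bookkeeping, most transparent in a unitary frame at a point and extended to $M$ by tensoriality.
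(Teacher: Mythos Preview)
Your proof is correct and follows essentially the same route as the paper's: both take the torsion-trace $(1,0)$-form $\alpha=\sqrt{-1}\,T^p_{kp}\,dz^k$ as the primitive, compute $\bar\pd\alpha$ via the first Bianchi identity to match the $(1,1)$-part, and compute $\pd\alpha$ via the contracted differential Bianchi identity for torsion to match the $(2,0)$-part. The only cosmetic difference is that the paper keeps covariant derivatives throughout (absorbing your $T^l_{kj}\tau_l$ term into the expression for $\pd\alpha$ via the standard $d=\n\wedge{}+T$ formula) whereas you pass to partial derivatives and then explicitly verify the cancellation of the quadratic residue $Q_{jk}$---a step the paper leaves implicit.
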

\begin{proof}
	Let $\alpha=\sqrt{-1}T_{kp}^p dz^k=\bar{\pd^*}\omega$ be the $(1,0)$-part of the Lee form of $(M,g,J)$. Then, by a standard formula relating the exterior and the covariant derivatives, we get
	\begin{equation}
	\begin{split}
		\bar\pd\alpha&=-\sqrt{-1}\n_{\bar s}T_{kp}^p dz^k\wedge d\bar{z^s}\\
		\pd \alpha&=
		\sqrt{-1}(\n_iT_{kp}^p+\frac{1}{2}T_{ik}^rT_{rp}^p
		)dz^i\wedge dz^k=\frac{\sqrt{-1}}{2}
		(\n_iT_{kp}^p-\n_kT_{ip}^p+T_{ik}^rT_{rp}^p)
		dz^i\wedge dz^k=\\&=
		\frac{\sqrt{-1}}{2}\n_{p}T_{ki}^p dz^i\wedge dz^k=
		-\frac{\sqrt{-1}}{2}(\mathrm{div}\,T)_{ik}dz^i\wedge dz^k,
	\end{split}
	\end{equation}
	where in the last line we used the differential Bianchi identity.
	\[
		\n_i T_{jk}^l+\n_k T_{ij}^l+\n_j T_{ki}^l=
		T_{ij}^pT_{kp}^l+T_{jk}^pT_{ip}^l+T_{ki}^pT_{jp}^l.
	\]
	Using now the first Bianchi identity
	\[
		\Omega_{k\bar s m}^{\ \ \ m}+\n_{\bar s}T_{km}^m=
		\Omega_{m\bar s k}^m,
	\]
	we conclude $\rho-\rho^T=d\alpha$.
\end{proof}

Form $\rho^T$ has a clear geometric interpretation. It is the curvature form of the connection $\n^T$ induced on the anticanonical bundle $-K_M$. A priori $\n^T$ does not preserve any metric, so its curvature form $\rho^T$ might contain a part of type $(2,0)$. Since $\n^T$ is compatible with the holomorphic structure, i.e., $(\n^T)^{0,1}=\bar\pd$, we can use $\n^T$-parallel transport to construct a nowhere vanishing holomorphic section of $\pi^*K_M$, where $\pi\colon\widetilde{M}\to M$ is the universal cover. This proves the following result.
\begin{theorem}\label{thm:trivial_canonical_bundle}
	If a compact complex manifold $M$ admits an HCF-periodic Hermitian metric, then the pull-back of the canonical bundle to the universal cover of $M$ is holomorphically trivial.
\end{theorem}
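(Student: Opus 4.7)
The plan is to exploit the monotonicity of $\inf_M\scal$ under the HCF via the evolution equation \eqref{eq:s_hat_equation}. For a $T$-periodic solution $g(0)=g(T)$, the function $t\mapsto\inf_M\scal(g(t))$ is nondecreasing by the weak parabolic maximum principle applied to \eqref{eq:s_hat_equation}, and simultaneously $T$-periodic, hence constant on $[0,T]$. The first step is to upgrade this to the pointwise identities $S^{(3)}\equiv 0$ and $\mathrm{div}\,T\equiv 0$: setting $v:=\scal-\inf_M\scal(g(0))\ge 0$, at every $t_0\in(0,T]$ the function $v(\cdot,t_0)$ attains the value $0$ somewhere on $M$. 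The strong parabolic maximum principle then forces $v\equiv 0$ on $M\times[0,t_0]$, so that $\Delta\scal\equiv 0$ and $\pd_t\scal\equiv 0$ there, which via \eqref{eq:s_hat_equation} yields $|S^{(3)}|^2+\tfrac{1}{2}|\mathrm{div}\,T|^2\equiv 0$ on $M\times[0,T]$.

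Next, I invoke the lemma preceding the theorem. With $S^{(3)}\equiv 0$ and $\mathrm{div}\,T\equiv 0$, the $2$-form $\rho^T$ vanishes identically. The essential geometric input is the identification of $\rho^T$ as the curvature form of the torsion-twisted connection $\n^T$ induced on the anticanonical line bundle $-K_M$; this drops out of a direct trace computation on $\Lambda^{\dim M}T^{1,0}M$, in which the $(2,0)$-part reduces to $\mathrm{div}\,T$ and the $(1,1)$-part to $S^{(3)}$. Thus $\n^T$ is a \emph{flat} connection on $-K_M$ whose $(0,1)$-component coincides with $\bar\pd$ by Definition~\ref{def:torsion_twisted_connection}.

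Finally, lift to the universal cover $\pi\colon\widetilde{M}\to M$. The pulled-back connection $\pi^*\n^T$ on $\pi^*(-K_M)$ remains flat, and since $\widetilde{M}$ is simply connected, parallel transport from any point yields a globally defined, nowhere-vanishing parallel section of $\pi^*(-K_M)$. Because $\pi^*\n^T$ satisfies $(\pi^*\n^T)^{0,1}=\bar\pd$, this section is automatically $\bar\pd$-closed, hence holomorphic; dualizing produces a nowhere-vanishing holomorphic section of $\pi^*K_M$, proving its triviality. The only genuinely substantive step is the geometric identification of $\rho^T$ with the curvature of $\n^T$ on $-K_M$; the rest is a standard application of the strong maximum principle combined with the flat-bundle argument on a simply connected base.
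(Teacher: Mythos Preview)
Your proof is correct and follows essentially the same route as the paper: you use the evolution equation~\eqref{eq:s_hat_equation} together with periodicity to force $S^{(3)}\equiv 0$ and $\mathrm{div}\,T\equiv 0$, then identify $\rho^T$ with the curvature of $\n^T$ on $-K_M$ and use flatness plus $(\n^T)^{0,1}=\bar\pd$ to produce the desired holomorphic section on the universal cover. The only difference is cosmetic: you spell out the strong maximum principle step explicitly (introducing $v=\scal-\inf_M\scal$ and invoking Hopf), whereas the paper compresses this into the one-line remark that $\inf_M\scal$ is strictly increasing unless $\scal$ is constant and $S^{(3)}=\mathrm{div}\,T=0$.
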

The statement of the above theorem is stronger then just vanishing of $c_1(M)\in H^2(M,\C)$. For example, Calabi-Eckman complex structures on $S^3\times S^3$ have holomorphically non-trivial canonical bundle, while $c_1=0$, see, e.g., discussion in~\cite[\S 2]{ba-do-ve-09}.

It is still an open question, whether the HCF~\eqref{eq:HCF} admits non-trivial, i.e., not stationary, periodic solutions.
\begin{problem}
	Is is true that if $g=g(t)$ is a periodic solution to the HCF on $(M,g,J)$, then $g(t)$ is a stationary solution, i.e., $g\equiv g(0)$?
\end{problem}

Theorem~\ref{thm:trivial_canonical_bundle} motivates us to formulate the following problem.
\begin{problem}
	Let $(M, J)$ be a compact complex manifold with a trivial canonical bundle. Does $M$ admit an HCF-stationary metric? By Theorem~\ref{thm:trivial_canonical_bundle}, such a metric necessarily will have $S^{(3)}=0$, $\mathrm{div}\,T=0$.
\end{problem}
This problem is a non-K\"ahler version of Calabi's conjecture. Namely, if the underlying manifold $(M,J)$ admits a K\"ahler metric $\omega$, then by Yau's theorem, there exists a unique K\"ahler metric $\omega_\phi=\omega+i\pd\bar{\pd}\phi$, such that $Ric(\omega_\phi)=0$. Of course, in this case the torsion vanishes, and all four Chern-Ricci curvatures coincide and equal zero.
\bibliographystyle{abbrv}
\bibliography{biblio}
\end{document}